\theoremstyle{plain}
\newtheorem{thm}{Theorem}[section]
\newtheorem{lemma}[thm]{Lemma}
\newtheorem{cor}[thm]{Corollary}
\newtheorem{prop}[thm]{Proposition}
\newtheorem*{thm*}{Theorem}
\theoremstyle{remark}
\newtheorem{rmk}[thm]{Remark}
\newtheorem{example}[thm]{Example}
\theoremstyle{definition}
\newtheorem{defn}[thm]{Definition}
\numberwithin{equation}{section}
\def\N{\mathbb{N}}
\def\Z{\mathbb{Z}}
\def\C{\mathcal{C}}
\def\A{\mathcal{A}}
\def\eps{\epsilon}
\DeclareMathOperator{\interior}{Int}
\DeclareMathOperator{\closure}{Cl}
\newenvironment{Pfof1-1-1}
{\par\vskip2\parsep\noindent{\sc Proof of Theorem \ref{Thm:Transitivity} Part 1. }}{{\hfill
		$\Box$}
	\par\vskip2\parsep}
\newenvironment{Pfof1-1-2}
{\par\vskip2\parsep\noindent{\sc Proof of  Theorem \ref{Thm:Transitivity} Part 2. }}{{\hfill
		$\Box$}
	\par\vskip2\parsep}
\newenvironment{Pfof1-2-1}
{\par\vskip2\parsep\noindent{\sc Proof of  Theorem \ref{Thm:Mixing} Part 1. }}{{\hfill
		$\Box$}
	\par\vskip2\parsep}
\newenvironment{Pfof1-2-2}
{\par\vskip2\parsep\noindent{\sc Proof of  Theorem \ref{Thm:Mixing} Part 2. }}{{\hfill
		$\Box$}
	\par\vskip2\parsep}
\newenvironment{Pfof1-3}
{\par\vskip2\parsep\noindent{\sc Proof of Theorem\ \ref{Thm:Specification}. }}{{\hfill
		$\Box$}
	\par\vskip2\parsep}
\begin{document}
	
	\title[Topological dynamics of Markov multi-maps]{Topological dynamics of Markov multi-maps of the interval}
	\author[J. P. Kelly]{James P. Kelly}
	\address[J. P. Kelly]{Department of Mathematics, Christopher Newport University, Newport News, VA 23606, USA}
	\email{james.kelly@cnu.edu}
	\author[K. McGoff]{Kevin McGoff}
	\address[K. McGoff]{Department of Mathematics and Statistics, University of North Carolina at Charlotte, Charlotte, NC 28223, USA}
	\email{kmcgoff1@uncc.edu}
	
	\makeatletter
	\@namedef{subjclassname@2020}{%
		\textup{2020} Mathematics Subject Classification}
	\makeatother
	
	\subjclass[2020]{Primary 37B05; Secondary 37B10, 54C60}
	\keywords{Markov, multi-map, set-valued function, Devaney chaos, topological mixing, topological transitivity, specification property}
	
	\begin{abstract}
	We study Markov multi-maps of the interval from the point of view of topological dynamics. Specifically, we investigate whether they have various properties, including topological transitivity, topological mixing, dense periodic points, and specification. To each Markov multi-map, we associate a shift of finite type (SFT), and then our main results relate the properties of the SFT with those of the Markov multi-map. These results complement existing work showing a relationship between the topological entropy of a Markov multi-map and its associated SFT. We also characterize when the inverse limit systems associated to the Markov multi-maps have the properties mentioned above.
	\end{abstract}
	
	\maketitle

	\section{Introduction} \label{Sect:Intro}
	
	The topological dynamics of multi-maps (also called set-valued functions) have been studied in various contexts with some early examples including \cite{Akin1993,MillerAkin1999}. In the past decade, there has been a renewed focus on the topic \cite{NallKennedy2018,KellyTennant2017,Kawamura2020,CordeiroPacifico2016,MetzgerMoralesRojasThieullen2017,GoodGreenwoodUresin2020}.

	A multi-map of the interval is a function $F : [0,1] \to 2^{[0,1]}$, where $2^{[0,1]}$ denotes the set of closed subsets of $[0,1]$. To any such multi-map one may associate two natural dynamical systems: the forward trajectory system and the inverse limit system. The state space of the forward trajectory system is $X = X(F) \subset [0,1]^{\mathbb{Z}_{\geq 0}}$, where
	\begin{equation*}
		X = \bigl\{ x \in [0,1]^{\mathbb{Z}_{\geq 0}} \colon \forall k \geq 0, \, x_{k+1} \in F(x_k) \bigr\}.
	\end{equation*}
	The natural map for this system is given by the left-shift $T : X \to X$, defined by setting $T(x)_k = x_{k+1}$. In this way, the multi-map $F$ gives rise to the topological dynamical system $(X,T)$. 
	
	To construct the inverse limit system, we begin with the state space $Y = Y(F) \subset [0,1]^{\mathbb{Z}_{\leq 0}}$, where
	\begin{equation*}
		Y = \bigl\{ y \in [0,1]^{\mathbb{Z}_{\leq 0}} \colon \forall k \leq -1, \, y_{k+1} \in F(y_k) \bigr\}.
	\end{equation*}
	The natural map for this system is given by the right-shift, $S \colon Y \to Y$, defined by setting $S(y)_k = y_{k-1}$. Observe that while $F$ is a multi-map, both $T$ and $S$ are continuous, single-valued functions. This enables us to discuss the dynamics of $F$ in terms of the systems $(X,T)$ and $(Y,S)$.
	
	For general multi-maps of the interval, the forward trajectory system and the inverse limit system can exhibit a wide variety of behaviors. Over the last decade, there has been extensive research, in particular, on the topology of the inverse limit space with some recent examples including \cite{GreenwoodKennedy2017,Kato2017,BanicGreenwood2020,NallVidal-Escobar2021,Ingram2021}. In the present work, we focus on the case of Markov multi-maps of the interval. In this context, one may associate to any Markov multi-map of the interval a shift of finite type (SFT) that captures the combinatorics of the multi-map. Markov multi-maps have been studied in recent years with a focus on how the associated SFT can be used to investigate the topological structure of the inverse limit \cite{BanicCrepnjak2018,BanicLunder2016,CrepnjakLunder2016,AlvinKelly2018}, as well as its topological entropy \cite{AlvinKelly2019,KellyMcGoff2019}. 	
	
	In this work, we establish tight connections between the topological dynamics of the forward trajectory system and the associated SFT on the one hand (see Theorems \ref{Thm:Transitivity}, \ref{Thm:Mixing}, and \ref{Thm:Specification}), and between the forward trajectory system and the inverse limit system on the other hand (see Propositions ). Taken together, these results allow one to understand the topological dynamics of both the forward trajectory and inverse limit systems for Markov multi-maps in terms of their associated SFTs. We specifically focus on topological transitivity, density of periodic points, topological mixing, and the specification property.

	In Section~\ref{Sect:Baground}, we give background definitions and notation, and, in Section~\ref{Sect:MarkovDefinitions}, we give a definition for Markov multi-maps and establish additional terminology and notation related to them. We establish some preliminary results in Section~\ref{Sect:PreliminaryResults} before stating and proving our main results in Sections~\ref{Sect:TransandMixing} and \ref{Sect:Spec}. In Section~\ref{Sect:Additional}, we show a connection between the dynamics of the forward and inverse trajectory spaces and other results demonstrating the utility of the main results. Finally, we present two examples, one to which our results apply and one to which they do not, in Section~\ref{Sect:Examples}.

	\section{Background and notation}\label{Sect:Baground}
	
	A topological dynamical system consists of a compact metric space $(X,d)$ and a continuous function $f\colon X\to X$.  We define $f^1=f$, and given $n\geq 2$, we define $f^n=f\circ f^{n-1}$. A point $x\in X$ is called \emph{periodic} under $f$ if $f^p(x)=x$ for some $x\in X$. The system $(X,f)$ is \emph{topologically transitive} if for every pair of non-empty, open sets $U,V\subset X$, we have $f^n(U)\cap V\neq\emptyset$ for some $n\geq 0$. We say that $(X,f)$ is \emph{Devaney chaotic} if it is topologically transitive and the set of periodic points is dense in $X$. This is not  the original definition given by Devaney in \cite{Devaney1989}, but it was shown to be equivalent in \cite{BanksEtAl1992}.
	
	The system $(X,f)$ is \emph{topologically mixing} if for every pair of non-empty, open sets $U,V\subset X$, there exists $N\geq0$ such that $f^n(U)\cap V\neq\emptyset$ for all $n\geq N$. Clearly topological mixing implies topological transitivity. In some contexts, it also implies Devaney chaos, but this is not always the case. For a thorough treatment of these properties and their relationships to one another, see \cite{BrinStuck2015}.
	
	The specification property was introduced in \cite{Bowen1971} as a sufficient condition for positive topological entropy,  but has since been studied extensively as a property unto its own. (See \cite{KwietniakLackaOprocha2016} for a thorough discussion of the specification property and its variants.) The system $(X,f)$ has the \emph{specification property} provided that for all $\eps>0$, there exists $N\geq 0$ such that given any $x_1,\ldots,x_l\in X$ and positive integers
	\[
	a_1\leq b_1<a_2\leq b_2<\cdots<a_l\leq b_l
	\]
	with $a_{i+1}-b_i\geq N$, for all $i=0,\ldots,l-1$, and given any $p\geq b_l+N$, there exists a point $z\in X$ such that $f^p(z)=z$ and, for all $k=0,\ldots,l$ and all $a_k\leq i\leq b_k$, $d(f^i(z),f^i(x_k))<\eps$. Every dynamical system with the specification property also has topological mixing and Devaney chaos.
	
	\subsection{Multi-maps}
	
	We denote by $2^{[0,1]}$ the set of closed subsets of $[0,1]$, and a multi-map of the interval is a function $F\colon[0,1]\to2^{[0,1]}$. The \emph{graph} of $F$ is the set $G(F)=\{(x,y)\in[0,1]^2\colon y\in F(x)\}$. A \emph{forward trajectory} for $F$ is a sequence $(x_0,x_1,\ldots)\in[0,1]^{\Z_{\geq0}}$ such that $(x_i,x_{i+1})\in G(F)$ for all $i\geq 0$, and an \emph{inverse trajectory} for $F$ is a sequence $(x_0,x_{-1},\ldots)\in[0,1]^{Z_{\leq0}}$ such that $(x_{i-1},x_i)\in G(F)$ for all $i\leq 0$. As mentioned in Section \ref{Sect:Intro} above, we denote the sets of forward and inverse trajectories by $X$ and $Y$, respectively, and we consider the appropriate shift maps $T: X \to X$ and $S : Y \to Y$. We endow $X$ and $Y$ with the topologies they inherit as subspaces of $[0,1]^{\Z_{\geq 0}}$ and $[0,1]^{\Z_{\leq0}}$ respectively and note that they are metrizable spaces. 
	
	We  also wish to consider finite versions of these spaces. In the finite case, there is no need for distinction between a forward or inverse trajectory. A \emph{finite trajectory} (of length $n \in \mathbb{N}$) for $F$ is a finite sequence $(x_0,\ldots,x_{n-1})\in[0,1]^n$ where $(x_i,x_{i+1})\in G(F)$ for all $i=0,\ldots,n-2$. We denote the set of finite trajectories of length $n$ by $X_n$ and give this set the subspace topology inherited from  $[0,1]^n$. 
	
	\subsection{Shifts of finite type}
	
	Let $\mathcal{A}$ be a finite set. The one-sided full shift on $\mathcal{A}$ is the set $\Sigma = \mathcal{A}^{\Z_{\geq 0}}$, which we endow with the product topology from the discrete topology on $\mathcal{A}$. Note that this makes $\Sigma$ into a compact metrizable space. There is a natural transformation $\sigma : \Sigma \to \Sigma$ defined for $\mathbf{a} = (a_0,a_1,a_2,\dots) \in \Sigma$ by $\sigma(\mathbf{a})_n = a_{n+1}$ and called the left-shift map on $\Sigma$. The pair $(\Sigma,\sigma)$ and its subsystems are very well-studied topological dynamical systems (see \cite{Lind1995} for a thorough introduction).	The set $\mathcal{A}$ is called the \emph{alphabet} for $\Sigma$.
	
	A \emph{subshift} of $\Sigma$ is any closed set $Z\subset\Sigma$ with $\sigma(Z)\subset Z$. We will be focused on a particular type of subshift defined by a \emph{transition matrix} $M : \mathcal{A} \times \mathcal{A} \to \{0,1\}$. Given a finite alphabet $\A$ and a transition matrix $M$ on $\A$, we define the \emph{nearest neighbor shift of finite type (SFT)} by
	\begin{equation*}
		\Sigma_M = \bigl\{ \mathbf{a} \in \Sigma : \forall n \geq 0, \, M(a_n,a_{n+1}) =1 \bigl\}.
	\end{equation*}
	Note that $\Sigma_M$ is a compact subset of $\Sigma$, and $\Sigma_M$ is invariant under $\sigma$, in the sense that $\sigma(\Sigma_M) \subset \Sigma_M$.
	
	A finite sequence of elements of $\A$ is referred to as a \emph{word}. We denote the set of words of length $n$ appearing in elements of $\Sigma_M$ by $\mathcal{L}_n=\mathcal{L}_n(\Sigma_M)$, and we let $\mathcal{L}=\bigcup \mathcal{L}_n$. The set $\mathcal{L}$ is called the \emph{language} associated with $\Sigma_M$. When necessary, we may look at subsets of the language. If $Z\subset\Sigma_M$ is any subset, then we use $\mathcal{L}_n(Z)$ and $\mathcal{L}(Z)$ to refer to the corresponding sets words appearing in any point of $Z$.
	
	Given a subset $\C\subset\A$, we let $\Sigma_M(\C)$ represent the set of all elements in $\Sigma_M$ which use only symbols from $\C$. For ease of notation, we let $\mathcal{L}_n(\C)=\mathcal{L}(\Sigma_M(\C))$ and $\mathcal{L}_n(\C)=\mathcal{L}_n(\Sigma_M(\C))$.
	
	Let $\Sigma_M$ be a nearest neighbor SFT. A set $\C \subset\A$ is \emph{irreducible} if for every $a,b\in\C$, there is a word $u\in\mathcal{L}(\C)$ beginning with $a$ and ending with $b$. We call $\C$ an \emph{irreducible component} of $\A$ if it is a maximal irreducible subset of $\A$. A set $\C\subset\A$ is \emph{mixing} if there exists $N\geq 1$ such that for every $a,b\in\C$ and every $n\geq N$, there is a word $u\in\mathcal{L}_n(\C)$ beginning with $a$ and ending with $b$. A maximal mixing subset of $\A$ is called a \emph{mixing component}. Given a set $\C\subset\A$, the subsystem $\Sigma_M(\C)$ is topologically transitive (resp. topologically mixing) if and only if the set $\C$ is irreducible (resp. mixing).
	
	\section{Markov multi-maps and their associated SFTs}\label{Sect:MarkovDefinitions}
	
	In this section we introduce Markov multi-maps. We give the definition and discuss some of their properties. We also define the SFT associated to a Markov multi-map, and we present the properties of these associated SFTs that are referenced in our main results (Theorems~\ref{Thm:Transitivity}, \ref{Thm:Mixing}, and \ref{Thm:Specification}).
	
	
	\begin{defn}\label{Defn:MarkovMultiMap}
		A Markov multi-map of $[0,1]$ is defined by a tuple 
		\[
		F = (P, \mathcal{A}_0, \mathcal{A}_1,\mathcal{A}_2, D, R, \{ f_a \}_{a \in \mathcal{A}_0})
		\]
		satisfying the following conditions:
		\begin{enumerate}
			\item $P = \{p_0,\dots,p_r\}$ is a finite partition of the interval such that $0 = p_0 < \dots < p_r = 1$;
			\item $\mathcal{A} = \mathcal{A}_0 \sqcup \mathcal{A}_1 \sqcup \mathcal{A}_2$ is a finite set;
			\item $D \colon \mathcal{A} \to 2^{[0,1]}$, and for each $a \in \mathcal{A}$, there exists $p_i \in P$ such that 
			\begin{equation*}
				D(a) = \left\{ \begin{array}{ll}
					[p_i,p_{i+1}], & \text{if }  a \in \mathcal{A}_0 \\
					\{p_i\}, & \text{if } a \in \mathcal{A}_1 \cup \mathcal{A}_2;
				\end{array}
				\right.
			\end{equation*}
			\item $R \colon \mathcal{A} \to 2^{[0,1]}$, and for each $a \in \mathcal{A}$, there exists $u \leq v$ in $P$ such that $R(a) = [u,v]$, where we also require that 
			\begin{equation*}
				\left\{ \begin{array}{ll}
					u < v, & \text{if } a \in \mathcal{A}_0 \\
					u < v \text{ and } R(a) \cap P = \{u,v\}, & \text{if } a \in \mathcal{A}_1 \\
					u = v, & \text{if } a \in \mathcal{A}_2;
				\end{array}
				\right.
			\end{equation*}
			\item for each $a \in \mathcal{A}_0$, $f_a : D(a) \to R(a)$ is a homeomorphism;
			\item $[0,1] = \bigcup_{a \in \mathcal{A}} D(a)$.
		\end{enumerate}
	\end{defn}
	
	Such a tuple defines a multi-map $F \colon [0,1] \to 2^{[0,1]}$ by the following rule: for $x \in [0,1]$, we let 
	\begin{equation*}
		F(x) = \left( \bigcup_{ \substack{a \in \mathcal{A}_0  \\ x \in D(a)}} \{f_a(x)\} \right) \cup \left( \bigcup_{ \substack{a \in \mathcal{A}_1 \cup \mathcal{A}_2  \\ x \in D(a)}} R(a) \right).
	\end{equation*}
	
	For $a \in \mathcal{A}_0$, let $G(a)$ denote the graph of $f_a$. 
	For $a \in \mathcal{A}_1 \cup \mathcal{A}_2$, let $G(a) = D(a) \times R(a)$. 
	Then the graph of $F$ is
	\begin{equation*}
		G(F) = \bigcup_{a \in \mathcal{A}} G(a).
	\end{equation*}
	Note that each $G(a)$ is closed in $[0,1] \times [0,1]$, and so is $G(F)$.

	Now we make some additional graph-related definitions. 
	\begin{defn}
		Let $a \in \mathcal{A}$.
		\begin{itemize}
			\item Suppose $a \in \mathcal{A}_0$ with $D(a) = [p_i,p_{i+1}]$ and $R(a) = [u,v]$. Define $D_0(a) = (p_i,p_{i+1})$ and $R_0(a) = (u,v)$, and let $G_0(a)$ be the graph of $f_a|_{D_0(a)}$.
			
			\item Suppose $a \in \mathcal{A}_1$ with $D(a) = \{p\}$ and $R(a) = [u,v]$. Define $D_0(a) = \{p\}$ and $R_0(a) = (u,v)$, and let $G_0(a) = \{p\} \times R_0(a)$.
			
			\item Suppose $a \in \mathcal{A}_2$, with $D(a) = \{p\}$ and $R(a) = \{q\}$. Define $D_0(a) = \{p\}$ and $R_0(a) = \{q\}$, and let $G_0(a) = \{(p,q)\}$.
		\end{itemize}
	\end{defn}

	With these definitions, we may now say what it means for a Markov multi-map to be properly parametrized.
	\begin{defn}\label{Defn: Properly parametrized}
		A Markov multi-map is \textit{properly parametrized} if the collection $\{G_0(a) \colon a \in \mathcal{A} \}$ forms a partition of $G(F)$.
	\end{defn}
	Throughout the paper we assume that all Markov multi-maps are properly parametrized. (As was shown in \cite{KellyMcGoff2019}, if $F_0$ is any Markov multi-map,  there exists a properly parametrized Markov multi-map $F_1$ such that $G(F_0) = G(F_1)$.) We also note that Section \ref{Sect:Examples} contains two examples of properly parametrized Markov multi-maps.
	
	\subsection{Associated SFT}

	Let $F$ be any properly parametrized Markov multi-map. Observe that for any $a,b \in \mathcal{A}$, if $D_0(b) \cap R_0(a) \neq \varnothing$, then $D_0(b) \subset R_0(a)$. This property (sometimes called the Markov property) enables us to make connections between $F$ and the following SFT, which we associate to $F$. 
	
	\begin{defn}
		Let $M$ be the square matrix indexed by $\mathcal{A}$ such that for $a,b \in \mathcal{A}$,
		\begin{equation*}
			M(a,b) = \left\{ \begin{array}{ll} 
				1, & \text{if } D_0(b) \subset R_0(a) \\
				0, & \text{otherwise}.
			\end{array}
			\right.
		\end{equation*}
		Let $\Sigma_M \subset \mathcal{A}^{\mathbb{Z}_{\geq0}}$ be the nearest-neighbor SFT with alphabet $\mathcal{A}$ and transition matrix $M$. 
	\end{defn}
	
	In what follows we will use the language of the SFT $\mathcal{L}=\mathcal{L}(\Sigma_M)$ to code finite and infinite trajectories. We make this notion explicit in the following definition.
	
	\begin{defn} Let $n \geq 2$. 
		We say that $(x,u) \in X_n \times \mathcal{L}_{n-1}$ is a \textit{labeled finite trajectory} (of length $n$) if 
		\[
		(x_k,x_{k+1}) \in G(u_k), \quad \forall k = 0,\dots,n-2.
		\]
		We denote the set of  labeled finite trajectories of length $n$ by $\mathcal{T}_{n}$ and endow it with the subspace topology inherited from $[0,1]^{n} \times \mathcal{A}^{n-1}$ (which has the product of the usual topology on $[0,1]^{n}$ and the discrete topology on $\mathcal{A}^{n-1}$).
		
		We say that $(x,u) \in \mathcal{T}_n$ is a \textit{special labeled finite trajectory} (of length $n$) if
		\[
		(x_k,x_{k+1}) \in G_0(u_k), \quad \forall k = 0,\dots,n-2.
		\]
		We denote the set of special labeled finite trajectories of length $n$ by $\mathcal{S}_{n}$ and endow it with the subspace topology inherited from $\mathcal{T}_n$.
	\end{defn}
	
	\subsection{Family of inverse functions}
	
	From Definition~\ref{Defn:MarkovMultiMap}, if $a\in\A_0$, then $f_a$ is a homeomorphism, and thus we may define $g_a=f^{-1}_a$. For $a\in\A_1\cup\A_2$, recall that $D(a)$ is a singleton, and then we define $g_a\colon R(a)\to D(a)$ to be the constant function. In this way, for every $a\in\A$, we have a continuous function $g_a\colon R(a)\to D(a)$ such that $G(a)=\{(x,y)\colon y\in R(a), x=g_a(y)\}$.
	
	We now define inverse functions for all words in $\mathcal{L}$. Given $u=u_0\cdots u_n\in\mathcal{L}$, the definition of $M$ guarantees that $D(u_{i})\subset R(u_{i-1})$ for each $i=1,\ldots,n$, and thus we may define
	\[
	g_u=g_{u_0}\circ\cdots\circ g_{u_n}.
	\]
	
	This brings us to an additional piece of notation to define. For every $u=u_0\cdots u_n\in\mathcal{L}$ we define $I_u=g_u(R(u_n))$. Observe that
	\begin{equation}
		I_{u_0\cdots u_n}\subset I_{u_0\cdots u_{n-1}}\subset\cdots\subset I_{u_0}= D(u_0).\label{Iu}
	\end{equation}
	In many cases, the subset relationships in \eqref{Iu} are not proper. In fact, there is one specific condition that makes one of these proper.	Recall, again from Definition~\ref{Defn:MarkovMultiMap}, that if $a\in\A_0$, then $R(a)$ is an interval with endpoints from the partition $P$, but the endpoints do not need to be adjacent within the partition. If $ab\in\mathcal{L}$, then $I_{ab}$ is a proper subset of $I_a$ if and only if $a\in\A_0$ and the endpoints of $R(a)$ are not adjacent partition elements.
	
	\subsection{Conditions on the associated SFT}
	
	The main theorems of this paper make reference to three conditions on $\Sigma_M$, which  we define here. The first arises from concepts discussed in the previous subsection. Given a (possibly degenerate) interval $I=[p,q]$, we define $\ell(I)=q-p$. Also, for a subshift $Z$ on alphabet $\mathcal{A}$, we say that $Z$ is \textit{transitive on symbols} if $\forall a,b \in \mathcal{L}_1(Z)$, there exists a word in $\mathcal{L}(Z)$ that starts with $a$ and ends with $b$.
	
	\begin{defn}\label{Def:CC}
		We say that $\Sigma_M$ satisfies the \emph{Coding Condition} (CC) if there exists a  subshift $Z\subset\Sigma_M$ such that $Z$ is transitive on symbols and
		\begin{enumerate}
			\item\label{CC1} For all $y\in[0,1]$, there exists $\mathbf{a}\in Z$ and $x\in X$ such that $(x_i,x_{i+1})\in G(a_i)$ for all $i\geq 0$ and $x_0=y$;
			\item $\lim\limits_{n\to\infty}\max\limits_{u\in\mathcal{L}_n(Z)}\ell(I_u)=0$.
		\end{enumerate}
	\end{defn}

	Property (1) in this definition guarantees that the subshift $Z$ codes trajectories that start at any point $y \in [0,1]$. Property (2) can be viewed as a generalization of the notion of uniformly expanding for (single-valued) Markov maps of the interval. In Section~\ref{Sect:Additional}, we give sufficient conditions for the Coding Condition that are relatively easy to verify.
	
	Before proceeding to the next definition, notice that there may be redundancy in how some trajectories in $X$ are coded in $\Sigma_M$. Recall that if $a\in\A_0\cup\A_1$, then $G(a)$ is an arc in $[0,1]^2$. Each endpoint of that arc must be separately represented by an element of $\A_2$. Thus there are finite trajectories that may be coded by a word in $\mathcal{L}(\A_2)$ and also by a word in $\mathcal{L}(\A_0\cup\A_1)$. The following definition serves to distinguish those elements of $\A$ that are ``essential" in coding the trajectories in $X$.
	
	\begin{defn}
		We say that a word $u\in\mathcal{L}_n$ is called \emph{essential} if the set
		$
		\{x\in X_{n+1}\colon (x,u)\in\mathcal{S}_{n+1}\}
		$
		is open in $X_{n+1}$. An element $a\in\mathcal{A}$ is called \emph{essential} if it appears in an essential word. We denote the set of essential elements of $\A$ by $\A^{(e)}$, and we define the set of essential symbolic sequences to be
		\[
		E=\{\mathbf{a}\in\Sigma_M\colon\forall i\geq 0,  a_i\in\mathcal{A}^{(e)}\}.
		\]
	\end{defn}
	
	In Lemmas~\ref{Lemma:EssentialCovers} and \ref{Lemma:essential implies unique labeling}, we show that every trajectory can be coded using exactly one sequence of essential elements of $\A$.
	
	\begin{defn}
		We say that $\Sigma_M$ satisfies the \emph{Irreducibility Condition} (IC) if there is an irreducible component $\C\subset\A$ such that $\A^{(e)} \subset \C$. Similarly, we say that $\Sigma_M$ satisfies the \emph{Mixing Condition} (MC) if there is a mixing component $\C\subset\A$ such that $\A^{(e)} \C$.
	\end{defn}
	
	We have now defined all the terminology used in our main results (Theorems~\ref{Thm:Transitivity}, \ref{Thm:Mixing}, and \ref{Thm:Specification}).

	\section{Preliminary results}\label{Sect:PreliminaryResults}
	
	In this section we establish  results that will be useful in the proofs of our main theorems. We assume throughout that $F$ is a properly parametrized Markov multi-map, $X$ is its forward trajectory space, and $\Sigma_M$ is the associated SFT. We begin with some comments about the topological structure of the of the space of finite trajectories. 
	
	\begin{rmk}\label{Remark:Homeomorphic}
		For every word $u=u_0\cdots u_{n-1}\in\mathcal{L}_n$, $R(u_{n-1})$ is homeomorphic to the set
		$
		\left\{x\in X_{n+1}\colon (x,u)\in\mathcal{T}_{n+1}\right\}.
		$
		Specifically, using the family of inverse functions $\{g_a\colon a\in\A\}$, we may define a homeomorphism 
		\[
		\phi\colon R(u_{n-1})\to \left\{x\in X_{n+1}\colon (x,u)\in\mathcal{T}_{n+1}\right\}
		\]
		by $\phi(x)=(y_i)_{i=0}^{n+1}$ where $y_i=g_{u_i}\circ\cdots\circ g_{u_{n-1}}(x)$ for $0\leq i\leq n$, and $y_{n+1}=x$. Likewise, $R_0(u_{n-1})$ is homeomorphic to the set
		$
		\left\{x\in X_{n+1}\colon (x,u)\in\mathcal{S}_{n+1}\right\}.
		$
	\end{rmk}
	
	From this, we get that for $u\in\mathcal{L}_n$ with $u_{n-1}\in\A_2$, then
	\[
	\left\{x\in X_{n+1}\colon (x,u)\in\mathcal{S}_{n+1}\right\}=\left\{x\in X_{n+1}\colon (x,u)\in\mathcal{T}_{n+1}\right\},
	\]
	which is a singleton set. On the other hand, if $u_{n-1}\in\A_0\cup\A_1$, then $\left\{x\in X_{n+1}\colon (x,u)\in\mathcal{T}_{n+1}\right\}$ is an arc, and $\left\{x\in X_{n+1}\colon (x,u)\in\mathcal{S}_{n+1}\right\}$ is the same arc but with the endpoints removed. This leads us to the following remark.
	
	\begin{rmk}\label{Remark:Closure}
		For all $u\in\mathcal{L}_n$, we have
		\[
			\closure_{X_{n+1}}\left(\left\{x\in X_{n+1}\colon (x,u)\in\mathcal{S}_{n+1}\right\}\right)=\left\{x\in X_{n+1}\colon (x,u)\in\mathcal{T}_{n+1}\right\}.
		\]
	\end{rmk}
	
	This understanding of the topological structure allows us to make the following assertions about essential words.
	
	\begin{lemma}\label{Lemma:EssentialCharacterization}
		Let $u=u_0\cdots u_{n-1}\in\mathcal{L}_n$.
		\begin{enumerate}
			\item\label{EssentialCharacterization1} If $u_{n-1}\in\A_0\cup\A_1$, then $u$ is essential. In particular, $\A_0\cup\A_1\subseteq\A^{(e)}$.
			\item\label{EssentialCharacterization2} If $u_{n-1}\in\A_2$ and  $x\in X_{n+1}$ is the unique trajectory with $(x,u)\in\mathcal{T}_{n+1}$, then $u$ is essential if and only if $x$ is isolated in $X_{n+1}$.
			\end{enumerate}
	\end{lemma}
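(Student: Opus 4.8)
The plan is to treat both parts through the single set $S(u):=\{x\in X_{n+1}\colon(x,u)\in\mathcal{S}_{n+1}\}$, whose openness in $X_{n+1}$ is, by definition, exactly the assertion that $u$ is essential; I will also write $T(u):=\{x\in X_{n+1}\colon(x,u)\in\mathcal{T}_{n+1}\}$. The engine for both parts is the following relabeling observation: if $(x_k,x_{k+1})\in G_0(u_k)\cap G(v_k)$ with $v_k\neq u_k$, then $u_k\in\A_2$. To see this, note that since the cells $\{G_0(a)\}$ partition $G(F)$, the point $(x_k,x_{k+1})$ cannot lie in $G_0(v_k)$, hence lies in $G(v_k)\setminus G_0(v_k)$; this difference is empty unless $v_k\in\A_0\cup\A_1$, and in that case it consists of the two endpoints of the arc $G(v_k)$. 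Each such endpoint is represented by an element of $\A_2$, so it lies in a single $G_0$-cell indexed by $\A_2$, and disjointness of the cells then forces $u_k\in\A_2$.

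Part 2 is then almost immediate. When $u_{n-1}\in\A_2$ we have $R(u_{n-1})=\{q\}$, so Remark~\ref{Remark:Homeomorphic} identifies both $S(u)$ and $T(u)$ with this single point; thus $S(u)=\{x\}$. Consequently $S(u)$ is open in $X_{n+1}$ if and only if the singleton $\{x\}$ is open, i.e.\ if and only if $x$ is isolated in $X_{n+1}$, which is the claim.

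For Part 1 I would first use proper parametrization to assign each $x\in X_{n+1}$ its unique special label, so that $\{S(v)\colon v\in\mathcal{L}_n\}$ partitions $X_{n+1}$. Since $\closure_{X_{n+1}}S(v)=T(v)$ by Remark~\ref{Remark:Closure} and $\mathcal{L}_n$ is finite, the set $\bigcup_{v\neq u}T(v)$ is closed and contains $X_{n+1}\setminus S(u)$; hence it contains $\closure_{X_{n+1}}(X_{n+1}\setminus S(u))$. Therefore it suffices to prove that $S(u)\cap T(v)=\varnothing$ for every $v\neq u$, for then no point of $S(u)$ is a limit of its complement and $S(u)$ is open.

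The emptiness of these intersections is the heart of the argument and the step I expect to be the main obstacle. Suppose toward a contradiction that $x\in S(u)\cap T(v)$ with $v\neq u$, and let $k$ be the largest index with $v_k\neq u_k$. The relabeling observation gives $u_k\in\A_2$, and since $u_{n-1}\in\A_0\cup\A_1$ this forces $k\leq n-2$; thus the coordinate $k+1$ exists and, by maximality of $k$, satisfies $v_{k+1}=u_{k+1}$. Because $(x_k,x_{k+1})$ is an endpoint of the arc $G(v_k)$, its second coordinate satisfies $x_{k+1}\in\partial R(v_k)$, so $x_{k+1}\notin R_0(v_k)$. On the other hand, $x\in S(u)$ gives $(x_{k+1},x_{k+2})\in G_0(u_{k+1})$, whence $x_{k+1}\in D_0(u_{k+1})=D_0(v_{k+1})\subset R_0(v_k)$, the final inclusion being the transition condition $M(v_k,v_{k+1})=1$. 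This contradiction establishes Part 1. The crux is recognizing that one must pass from the (relabelable) $\A_2$ coordinate $k$ to its successor $k+1$, whose existence is guaranteed precisely by the hypothesis $u_{n-1}\notin\A_2$.
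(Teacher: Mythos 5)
Your proof is correct, and your Part 2 coincides exactly with the paper's. For Part 1, however, your mechanism is genuinely different from the paper's. The paper argues topologically: since $\mathcal{L}_n$ is finite, $X_{n+1}$ is a finite union of arcs $T(v)=\{x\colon(x,v)\in\mathcal{T}_{n+1}\}$ and singletons, and proper parametrization is invoked to assert that two such arcs meet only at their endpoints, whence the open arc $S(u)=\{x\colon(x,u)\in\mathcal{S}_{n+1}\}$ equals $\interior_{X_{n+1}}T(u)$ and is therefore open; the crucial deduction is compressed into an ``it follows that.'' You instead reduce openness to the disjointness $S(u)\cap T(v)=\varnothing$ for all $v\neq u$ (via Remark~\ref{Remark:Closure} and finiteness of $\mathcal{L}_n$) and then prove that disjointness symbolically: at the last index $k$ of disagreement, your relabeling observation forces $u_k\in\A_2$, the hypothesis $u_{n-1}\in\A_0\cup\A_1$ guarantees the successor index $k+1$ exists, and the transition condition $D_0(v_{k+1})\subset R_0(v_k)$ collides with the fact that arc endpoints have second coordinate on the boundary of $R(v_k)$. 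In effect you prove Lemma~\ref{Lemma:essential implies unique labeling} for words ending in $\A_0\cup\A_1$ directly, \emph{without} the essentiality hypothesis, and then read essentiality off from it --- inverting the paper's logical order, since the paper derives unique labeling \emph{from} essentiality. Your route makes fully explicit the step the paper leaves implicit (an endpoint of another arc, or an isolated point, must be shown not to sit inside $S(u)$, which is not literally contained in ``arcs meet only at endpoints''); the paper's route buys brevity and geometric transparency. One small point deserving a sentence in your write-up: when you assert that the special labels partition $X_{n+1}$ into the sets $S(v)$, $v\in\mathcal{L}_n$, you should note why the label word of a trajectory actually lies in $\mathcal{L}_n$ --- adjacent labels satisfy $M(v_k,v_{k+1})=1$ because $x_{k+1}\in D_0(v_{k+1})\cap R_0(v_k)$ together with the Markov property, and the word is right-extendable because every finite trajectory extends to an infinite one. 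The paper elides the same point in Lemma~\ref{Lemma:EssentialCovers}, so this is a shared omission rather than a gap in your argument.
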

	\begin{proof}
		We have that
		\[
		X_{n+1}=\bigcup_{u\in\mathcal{L}_n}\{x\in X_{n+1}\colon (x,u)\in\mathcal{T}_{n+1}\}.
		\]
		Since $\mathcal{L}_n$ is finite, $X_{n+1}$ is a finite union of arcs and individual points (all or some of which are contained in those arcs). Moreover, since $F$ is properly parametrized, two arcs cannot intersect except at their endpoints. It follows that if $u_{n-1}\in\A_0\cup\A_1$, then
		\[
		\interior_{X_{n+1}}\left(\left\{x\in X_{n+1}\colon (x,u)\in\mathcal{T}_{n+1}\right\}\right)=\left\{x\in X_{n+1}\colon (x,u)\in\mathcal{S}_{n+1}\right\},
		\]
		so $u$ is essential, thus showing \eqref{EssentialCharacterization1}.
		
		Now suppose $u_{n-1}\in\A_2$, and let $x\in X_{n+1}$ be the unique trajectory with $(x,u)\in\mathcal{T}_{n+1}$. Then
		\[
		\left\{y\in X_{n+1}\colon (y,u)\in\mathcal{S}_{n+1}\right\}=\{x\},
		\]
		so clearly $u$ is essential if and only if $x$ is an isolated point and \eqref{EssentialCharacterization2} holds.
	\end{proof}
	
	The next two lemmas show that all trajectories may be coded with essential elements of $\A$ and that there is a certain uniqueness to the coding by essential elements.
	
	\begin{lemma} \label{Lemma:EssentialCovers}
		Let $E$ be the set of essential symbolic sequences in $\Sigma_M$. For all $x\in X$ there exists $\mathbf{a}\in E$ such that $(x_i,x_{i+1})\in G(a_i)$ for all $i\geq 0$.
	\end{lemma}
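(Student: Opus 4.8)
The plan is to reduce the infinite statement to a finite one and then pass to the limit by a compactness (König's lemma) argument. Fix $x\in X$, and for each $n\geq 1$ consider the finite trajectory $(x_0,\dots,x_n)\in X_{n+1}$; call a word $u=u_0\cdots u_{n-1}\in\mathcal{L}_n$ a \emph{valid essential coding} of it if every symbol $u_k$ lies in $\A^{(e)}$ and $(x_k,x_{k+1})\in G(u_k)$ for all $k$, i.e. $(\,(x_0,\dots,x_n),u)\in\mathcal{T}_{n+1}$. The finite claim I would isolate is: \emph{every} $\xi\in X_{n+1}$ admits an essential word $u$ with $(\xi,u)\in\mathcal{T}_{n+1}$. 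Granting this, the valid essential codings of $(x_0,\dots,x_n)$, ranged over $n$, form a rooted tree under the ``delete the last symbol'' map: a prefix of such a coding is again admissible (prefixes of admissible words are admissible), still codes the shorter trajectory, and still uses only essential symbols, so the tree is prefix-closed; it is finitely branching because $\A$ is finite, and nonempty at every level by the finite claim (an essential word has all of its symbols essential by definition). König's lemma then yields an infinite branch $\mathbf{a}=(a_0,a_1,\dots)$, which lies in $\Sigma_M$ since each prefix is admissible, uses only essential symbols, and satisfies $(x_i,x_{i+1})\in G(a_i)$ for all $i$; hence $\mathbf{a}\in E$ is the desired coding.

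For the finite claim I would use proper parametrization together with Lemma \ref{Lemma:EssentialCharacterization} and Remark \ref{Remark:Closure}. Since $\{G_0(a)\}_{a\in\A}$ partitions $G(F)$, each $\xi\in X_{n+1}$ has, step by step, a \emph{unique} word $u$ with $(\xi,u)\in\mathcal{S}_{n+1}$; the Markov property guarantees $u\in\mathcal{L}_n$, exactly as in the earlier observation that $D_0(b)\cap R_0(a)\neq\varnothing$ forces $D_0(b)\subset R_0(a)$. If this $u$ is essential we are done, as $\mathcal{S}_{n+1}\subset\mathcal{T}_{n+1}$. If $u$ is not essential, then by Lemma \ref{Lemma:EssentialCharacterization}\eqref{EssentialCharacterization1} it ends in a symbol of $\A_2$, and by \eqref{EssentialCharacterization2} the set $\{y:(y,u)\in\mathcal{S}_{n+1}\}=\{\xi\}$ is a single non-isolated point of $X_{n+1}$.

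To finish the non-isolated case I would run a pigeonhole argument over the finitely many length-$n$ words. Writing $S(w)=\{y\in X_{n+1}:(y,w)\in\mathcal{S}_{n+1}\}$, the family $\{S(w)\}_{w\in\mathcal{L}_n}$ partitions $X_{n+1}$, and $\overline{S(w)}=\{y:(y,w)\in\mathcal{T}_{n+1}\}$ by Remark \ref{Remark:Closure}. Choosing a sequence in $X_{n+1}\setminus\{\xi\}$ converging to $\xi$, infinitely many of its terms lie in a single $S(w)$ with $w\neq u$, whence $\xi\in\overline{S(w)}=\{y:(y,w)\in\mathcal{T}_{n+1}\}$. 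This $w$ cannot end in $\A_2$: otherwise $S(w)$ would be a singleton $\{\xi_w\}$ equal to its own closure, forcing $\xi=\xi_w\in S(w)$ and contradicting $\xi\in S(u)$ with $u\neq w$. Hence $w$ ends in $\A_0\cup\A_1$, so it is essential by Lemma \ref{Lemma:EssentialCharacterization}\eqref{EssentialCharacterization1}, and $(\xi,w)\in\mathcal{T}_{n+1}$ supplies the required coding.

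The step I expect to be the main obstacle is the non-isolated case of the finite claim, namely ruling out that the approximating coding $w$ again ends in $\A_2$; the clean device for this is precisely the closure identity $\overline{S(w)}=\{y:(y,w)\in\mathcal{T}_{n+1}\}$ of Remark \ref{Remark:Closure}, which degenerates to a singleton exactly when $w$ ends in $\A_2$. The passage from finite to infinite codings is then routine, provided the tree is set up to be prefix-closed, which is why I would phrase its vertices as codings with ``all symbols essential'' rather than as ``essential words.''
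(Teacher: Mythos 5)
Your proposal is correct, and while it relies on the same underlying facts as the paper's proof (the unique special coding coming from proper parametrization, Lemma~\ref{Lemma:EssentialCharacterization}, Remark~\ref{Remark:Closure}, and a compactness step), it is organized by a genuinely different decomposition. The paper never passes through a finite claim for every prefix: it takes the special coding $\mathbf{a}$ of the whole infinite trajectory, locates the first non-essential symbol $a_{n_0}$, shows the coding must consist of $\A_2$ symbols from some $m_0\leq n_0$ onward, and repairs only that tail --- for each $k$ it produces (by exactly the approximation reasoning you spell out with your pigeonhole-and-closure argument, which the paper states quite tersely as ``since $u^k$ is not essential, there is a word $v^k\dots$'') a word $v^k$ ending in $\A_0\cup\A_1$ coding $(x_{m_0},\ldots,x_{k+1})$, extends each $v^k$ to a point of $\Sigma_M$, extracts an accumulation point $\mathbf{b}$ by compactness of $\Sigma_M$, and finally glues the untouched head $a_0\cdots a_{m_0-1}$ back onto $\mathbf{b}$ via the Markov property (using that $x_{m_0}$ is interior to $R(a_{m_0-1})$, so $D(b_{m_0})\subset R_0(a_{m_0-1})$). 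Your route --- isolate the finite claim that every finite trajectory admits an essential coding, then run K\"onig's lemma on the prefix-closed, finitely branching tree of all-essential codings --- buys two simplifications: the $n_0/m_0$ bookkeeping and the gluing step disappear entirely, since consistency across prefixes is delegated to the tree, and the delicate approximation step is done once, cleanly, at the finite level. What the paper's construction buys in exchange is a little extra information: its essential coding agrees with the special coding on the entire initial segment where that coding was already essential, whereas your K\"onig branch carries no such guarantee. The two limit passages (sequential compactness of $\Sigma_M$ versus K\"onig's lemma) are of course the same compactness principle in different clothing.
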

	
	\begin{proof}
		Let $x\in X$, and let $\mathbf{a} \in \Sigma_M$ such that $(x_i,x_{i+1})\in G_0(a_i)$ for all $i\geq 0$. If $\mathbf{a}\in E$, then we are done. Otherwise $\mathbf{a}\in\Sigma_M\setminus E$, so we may fix $n_0\geq 0$ to be the smallest integer such that $a_{n_0}\in\A\setminus\A^{(e)}$. By Lemma~\ref{Lemma:EssentialCharacterization}, Part~\eqref{EssentialCharacterization1}, $a_{n_0}\in\A_2$. For all $i\geq n_0$, the word $a_0\cdots a_i$ includes $a_{n_0}$ and is thus not essential, so we must have that $a_i\in\A_2$ for all $i\geq n_0$ as well. Now let $m_0\leq n_0$ be the smallest integer such that $a_i\in\A_2$ for all $i\geq m_0$.
		
		For each $k\geq n_0$, let $u^k=a_{m_0}\cdots a_k$, which is not an essential word (by choice of $n_0$). Also, since $a_i \in \A_2$ for $i \geq m_0$, we see that $G_0(a_i)$ is a singleton, and then $(x_{m_0},\ldots, x_{k+1})$ is the unique trajectory satisfying $(x_i,x_{i+1})\in G_0(a_i)$. Since $u^k$ is not essential, there is a word $v^k=v^k_{m_0}\cdots v^k_k$ such $v^k_k\in\A_0\cup\A_1$, and $(x_i,x_{i+1})$ is an endpoint of $G(v^k_i)$. Now we may extend $v^k$ to an infinite sequence $\mathbf{b}^k=(b^k_i)_{i=m_0}^\infty \in \Sigma_M$ such that $b^k_i=v^k_i$ for $m_0\leq i\leq k$. In this way we create a sequence $(\mathbf{b}^k)_{k=n_0}^\infty$ in the compact space $\Sigma_M$, and thus it must have an accumulation point $\mathbf{b}=(b_i)_{i=m_0}^\infty\in\Sigma_M$.
		
		We show that $b_i\in\A^{(e)}$ and $(x_i,x_{i+1})\in G(b_i)$ for all $i\geq m_0$. Fix $i\geq m_0$. Then $b_i$ is an accumulation point of $(b^k_i)_{k=n_0}^\infty$, so $b^k_i=b_i$ for infinitely many $k$. In particular, there exists $k>i$ with $b_i=b^k_i=v^k_i$. We know that $v^k$ is an essential word, so every term in $v^k$ is essential. Thus $b_i\in\A^{(e)}$. By construction, $(x_i,x_{i+1})\in G(v^k_i)=G(b_i)$.
		
		If $m_0=0$, then $\mathbf{b}$ is our desired sequence, and we are done. Otherwise, $a_{m_0-1}\in\A_0\subset\A^{(e)}$, and $x_{m_0}$ is an interior point of $R(a_{m_0-1})$ (since $(x_{m_0-1},x_{m_0}) \in G_0(a_{m_0-1})$). We also have that $x_{m_0}\in D(b_{m_0})$, and then by the Markov property we have that $D(b_{m_0})\subset R_0(a_{m_0-1})$. Thus we may define $b_{m_0-1}=a_{m_0-1}$, and we have $b_{m_0-1}b_{m_0}\in\mathcal{L}_2$. From here, we may define $b_i=a_i$ for all $0\leq i\leq m_0-1$ to form an element $\mathbf{b^*}=(b_i)_{i=0}^\infty\in\Sigma_M$ such that $(x_i,x_{i+1})\in G(b_i)$ for all $i\geq 0$. By the choice of $n_0$, we have that $a_i\in\A^{(e)}$ for all $0\leq i<n_0$, so in particular it follows that $b_i\in\A^{(e)}$ for all $0\leq i<m_0$. Thus $\mathbf{b^*}\in E$.
	\end{proof}
		
	\begin{lemma}\label{Lemma:essential implies unique labeling}
		Given $n\in\N$,	if $u\in\mathcal{L}_n(\A)$ is essential, and $(x,u)\in\mathcal{S}_{n+1}$, then for every $v\in\mathcal{L}_n(\A)\setminus\{u\}$, we have $(x,v)\notin\mathcal{T}_{n+1}$.
	\end{lemma}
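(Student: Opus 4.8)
The plan is to argue by contradiction, exploiting the interplay between the $\mathcal{S}$- and $\mathcal{T}$-fibers over a fixed word. For $w \in \mathcal{L}_n$, write $S_w = \{x' \in X_{n+1} \colon (x',w) \in \mathcal{S}_{n+1}\}$ and $T_w = \{x' \in X_{n+1} \colon (x',w) \in \mathcal{T}_{n+1}\}$. Suppose toward a contradiction that there is some $v \in \mathcal{L}_n(\A) \setminus \{u\}$ with $(x,v) \in \mathcal{T}_{n+1}$, i.e.\ $x \in T_v$; I will derive a contradiction with the hypothesis $x \in S_u$ together with the essentiality of $u$.

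First I would record that distinct words have disjoint $\mathcal{S}$-fibers. If $x' \in S_u \cap S_v$, then for every $k$ the point $(x'_k, x'_{k+1})$ lies in $G_0(u_k) \cap G_0(v_k)$. Since $(x'_k, x'_{k+1}) \in G(F)$ and, by proper parametrization, $\{G_0(a) \colon a \in \A\}$ partitions $G(F)$, the point $(x'_k, x'_{k+1})$ belongs to exactly one of the sets $G_0(a)$; hence $u_k = v_k$ for all $k$, so $u = v$. As $u \neq v$, this shows $S_u \cap S_v = \varnothing$.

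Next I would invoke the two structural facts already established. Because $u$ is essential, the set $S_u$ is open in $X_{n+1}$, and since $(x,u) \in \mathcal{S}_{n+1}$ we have $x \in S_u$; thus $S_u$ is an open neighborhood of $x$. On the other hand, Remark~\ref{Remark:Closure} gives $T_v = \closure_{X_{n+1}}(S_v)$, so the assumption $x \in T_v$ means $x \in \closure_{X_{n+1}}(S_v)$. Consequently every open neighborhood of $x$ meets $S_v$; in particular $S_u \cap S_v \neq \varnothing$, contradicting the disjointness from the previous step. This contradiction forces $(x,v) \notin \mathcal{T}_{n+1}$ for every $v \neq u$, completing the argument.

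I do not expect a genuine obstacle here: the content of the lemma is captured entirely by the three ingredients above (disjointness of $\mathcal{S}$-fibers from the partition property, openness of $S_u$ from essentiality, and the closure identity $T_v = \closure_{X_{n+1}}(S_v)$), and the only care needed is to phrase the partition argument so that it applies at all coordinates simultaneously. It is worth noting where the subtlety really lives: at coordinates with $u_k \in \A_2$ the point $(x_k,x_{k+1})$ is a single point that could \emph{a priori} be an endpoint of some arc $G(v_k)$ with $v_k \in \A_0 \cup \A_1$, which is precisely how a spurious label $v \neq u$ might arise. The topological argument via openness and closure dispatches this uniformly, without a separate case analysis, which is why I would prefer it to a direct coordinate-by-coordinate inspection.
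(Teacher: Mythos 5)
Your proof is correct and is essentially the paper's own argument: both rest on the disjointness of the $\mathcal{S}$-fibers $S_u$ and $S_v$ (from proper parametrization), the openness of $S_u$ (from essentiality), and the identity $T_v = \closure_{X_{n+1}}(S_v)$ from Remark~\ref{Remark:Closure}. The only cosmetic difference is that you phrase it as a contradiction while the paper states directly that the open set $S_u$, being disjoint from $S_v$, is disjoint from its closure; your fleshed-out justification of the disjointness step is a detail the paper leaves implicit.
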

	\begin{proof}
		Let $n\in\N$, $u\in\mathcal{L}_n(\A)$, and $(x,u)\in\mathcal{S}_{n+1}$.	We have assumed that $F$ is properly parametrized, so for every $v\in\mathcal{L}_n(\A)\setminus\{u\}$, the set $\left\{y\in X_{n+1}\colon (y,v)\in\mathcal{S}_{n+1}\right\}$ is disjoint from $\left\{y\in X_{n+1}\colon (y,u)\in\mathcal{S}_{n+1}\right\}$. Then since $u$ is essential, we  have that $\left\{y\in X_{n+1}\colon (y,u)\in\mathcal{S}_{n+1}\right\}$ is open, so
		\[
		\closure_{X_{n+1}}\left(\left\{y\in X_{n+1}\colon (y,v)\in\mathcal{S}_{n+1}\right\}\right)\cap\left\{y\in X_{n+1}\colon (y,u)\in\mathcal{S}_{n+1}\right\}=\emptyset.
		\]
		Finally, by Remark~\ref{Remark:Closure},
		\[
		\closure_{X_{n+1}}\left(\left\{y\in X_{n+1}\colon (y,v)\in\mathcal{S}_{n+1}\right\}\right)=\left\{y\in X_{n+1}\colon (y,v)\in\mathcal{T}_{n+1}\right\},
		\]
		so we must have that $(x,v)\notin\mathcal{T}_{n+1}$ for all $v\in\mathcal{L}_n(\A)\setminus\{u\}$.
	\end{proof}
	
	Let $\pi_n\colon X\to X_n$ denote the projection map from the forward trajectory space to the space of finite trajectories of length $n$, defined by $\pi_n(x) = (x_0,\dots,x_{n-1})$. 
	
	\begin{lemma}\label{Lemma:SpecialLabeledTrajectories}
		Let $n \geq 2$. Suppose $U_n \subset X_n$ is open and $\pi_n^{-1}(U_n) \subset X$ is nonempty. Then there exists $(z_0^{n-1},a_0^{n-2}) \in \mathcal{S}_n$ such that $a_0^{n-2} \subset \mathcal{L}_n(E)$ and $z_0^{n-1} \in U_n$.
	\end{lemma}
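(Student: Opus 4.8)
The plan is to build the required special labeled trajectory from a genuine forward trajectory passing through $U_n$: first code that trajectory by essential symbols using Lemma~\ref{Lemma:EssentialCovers}, and then use the openness of $U_n$ together with the density statement of Remark~\ref{Remark:Closure} to move off the boundary arcs and into the special (interior) part.

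Concretely, I would first fix a point $x\in\pi_n^{-1}(U_n)$, which exists by hypothesis, so that $x\in X$ and $\pi_n(x)=(x_0,\dots,x_{n-1})\in U_n$. Applying Lemma~\ref{Lemma:EssentialCovers} to $x$ produces an essential symbolic sequence $\mathbf{a}\in E$ with $(x_i,x_{i+1})\in G(a_i)$ for every $i\geq 0$. Set $u=a_0\cdots a_{n-2}$; since $\mathbf{a}\in E$, every symbol of $u$ is essential, so $u\in\mathcal{L}_{n-1}(E)$. Restricting the containment $(x_i,x_{i+1})\in G(a_i)$ to $0\leq i\leq n-2$ shows that $\pi_n(x)$ lies in the set $\{w\in X_n\colon (w,u)\in\mathcal{T}_n\}$.

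The heart of the argument is to replace $\pi_n(x)$ --- which may lie on the boundary of the arc coded by $u$, so that $(\pi_n(x),u)$ need not be special --- by a nearby genuinely special trajectory that still lies in $U_n$. For this I invoke Remark~\ref{Remark:Closure}, applied with its ``$n$'' equal to $n-1$, which gives
\[
\closure_{X_n}\bigl(\{w\in X_n\colon (w,u)\in\mathcal{S}_n\}\bigr)=\{w\in X_n\colon (w,u)\in\mathcal{T}_n\}.
\]
Thus $\pi_n(x)$ belongs to the closure of $\{w\in X_n\colon (w,u)\in\mathcal{S}_n\}$. Because $U_n$ is open in $X_n$ and contains $\pi_n(x)$, it must meet that set; choosing any $z=(z_0,\dots,z_{n-1})\in U_n\cap\{w\in X_n\colon (w,u)\in\mathcal{S}_n\}$ yields $(z,u)\in\mathcal{S}_n$ with $z\in U_n$ and $u\in\mathcal{L}_{n-1}(E)$, as required.

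The only genuine obstacle is the passage from $\mathcal{T}_n$ to $\mathcal{S}_n$: the trajectory $x$ furnished by Lemma~\ref{Lemma:EssentialCovers} might be coded only at the endpoints of the relevant graph arcs, and one cannot in general perturb $x$ while staying inside $X$. The resolution is that the conclusion only asks for a point of $X_n$ (not of $\pi_n(X)$) lying in $U_n$, so the density guaranteed by Remark~\ref{Remark:Closure} --- equivalently, that the special trajectories sit densely in the labeled trajectories, per Remark~\ref{Remark:Homeomorphic} --- combined with the openness of $U_n$ is exactly enough, and no extendability analysis is needed.
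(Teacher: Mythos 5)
Your proof is correct and follows essentially the same route as the paper: fix $x\in\pi_n^{-1}(U_n)$, code it by an essential sequence via Lemma~\ref{Lemma:EssentialCovers}, and then use the openness of $U_n$ to perturb from $\mathcal{T}_n$ into $\mathcal{S}_n$. The only cosmetic difference is that you invoke Remark~\ref{Remark:Closure} to justify the perturbation, whereas the paper argues directly from the arc structure (the labeled set is an arc whose interior is the special set) --- the same fact that underlies the remark.
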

	\begin{proof}
		Since $U = \pi_n^{-1}(U_n)$ is nonempty, there exists $x \in U$. By Lemma~\ref{Lemma:EssentialCovers}, there exists $\mathbf{a} \in E$ such that $(x_i,x_{i+1})\in G(a_i)$ for all $i\geq 0$. In particular, $(x_0^{n-1},a_0^{n-2})\in\mathcal{T}_n$
		
		If in fact $(x_0^{n-1},a_0^{n-2})\in\mathcal{S}_n$, then we are done. If not, the set $\{y\in X_n\colon (y,a_0^{n-2})\in\mathcal{T}_n\}$ is an arc with $x_0^{n-1}$ as an endpoint, and the interior of that arc is the set $\{y\in X_n\colon (y,a_0^{n-2})\in \mathcal{S}_n\}$. Since $x_0^{n-1}\in U_n$, which is open in $X_n$, there exists $y\in U_n$ such that $(y,a_0^{n-2})\in\mathcal{S}_n$.
	\end{proof}
	
	We have one final lemma before we can move on toour main results. This guarantees that essential words can always be extended to essential words of arbitrary length.
	
	\begin{lemma}\label{Lemma:samelength}
		If $u\in\mathcal{L}_n$ is an essential word, then for all $m\geq n$, there exists an essential word $v\in\mathcal{L}_m$ containing $u$.
	\end{lemma}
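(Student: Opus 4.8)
The plan is to reduce the statement to a single-step right extension: I will show that every essential word $u = u_0\cdots u_{n-1} \in \mathcal{L}_n$ admits a symbol $b$ with $ub \in \mathcal{L}_{n+1}$ essential. Granting this, iterating the extension $m-n$ times produces an essential $v \in \mathcal{L}_m$ having $u$ as a prefix, and in particular containing $u$. The one-step extension is handled by cases on the terminal symbol $u_{n-1}$, via the characterization in Lemma~\ref{Lemma:EssentialCharacterization}.

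First, suppose $u_{n-1} \in \A_0 \cup \A_1$. Then $R(u_{n-1}) = [s,t]$ is nondegenerate by Definition~\ref{Defn:MarkovMultiMap}(4), so its interior contains a full partition cell $(p_i,p_{i+1})$. Because singletons cannot cover the interior of a cell, condition (6) forces some $b \in \A_0$ with $D(b) = [p_i,p_{i+1}]$, whence $D_0(b) \subset R_0(u_{n-1})$ and $M(u_{n-1},b)=1$. The same cell argument applied to $R(b)$ shows that $ub$ extends to the right indefinitely, so $ub \in \mathcal{L}_{n+1}$; since it ends in $\A_0$, it is essential by Lemma~\ref{Lemma:EssentialCharacterization}, Part~\eqref{EssentialCharacterization1}. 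As the new terminal symbol again lies in $\A_0$, every subsequent extension stays in this easy case.

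The substantive case is $u_{n-1} \in \A_2$, where $R(u_{n-1}) = \{q\}$ with $q \in P$ and every admissible $b$ satisfies $D_0(b) = \{q\}$, forcing $b \in \A_1 \cup \A_2$. If some admissible $b \in \A_1$ exists, I use it: $ub$ ends in $\A_1$ and is essential, returning to the previous case. The main obstacle is when every admissible extension lies in $\A_2$. Here I choose any $b \in \A_2$ with $ub \in \mathcal{L}_{n+1}$ (one exists since $u$ is a prefix of a point of $\Sigma_M$) and verify essentiality through Lemma~\ref{Lemma:EssentialCharacterization}, Part~\eqref{EssentialCharacterization2}. Because $u$ is essential and ends in $\A_2$, the unique trajectory $x$ with $x_n = q$ is isolated in $X_{n+1}$; dropping the last coordinate gives a continuous map $X_{n+2} \to X_{n+1}$ whose fiber over $x$ is open and is carried homeomorphically onto $F(q)$ by the last-coordinate projection. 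Hence the unique extension $x'$ (with $x'_{n+1} = q'$, where $R(b) = \{q'\}$) is isolated in $X_{n+2}$ if and only if $q'$ is isolated in $F(q)$. The key observation is that the absence of an admissible $\A_1$-extension means no symbol of $\A_1$ has domain $\{q\}$, so $F(q)$ receives only the single-point contributions coming from $\A_0$-symbols with $q$ as a domain endpoint and from $\A_2$-symbols with domain $\{q\}$; thus $F(q)$ is finite, hence discrete, $q'$ is isolated, and $ub$ is essential.

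Finally, induction finishes the argument: each step preserves essentiality, lengthens the word by one, and retains $u$ as a prefix, so after $m-n$ steps we obtain an essential $v \in \mathcal{L}_m$ containing $u$. I expect the all-$\A_2$ subcase to be the crux, since there essentiality is not read off from the terminal symbol but must be recovered from the isolated-point criterion together with the discreteness of $F(q)$ forced by the lack of an $\A_1$ continuation.
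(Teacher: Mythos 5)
Your proposal is correct and takes essentially the same approach as the paper: reduce to a single right extension and induct, split on whether the terminal symbol lies in $\A_0\cup\A_1$ or in $\A_2$, and in the subcase where every admissible successor lies in $\A_2$, use openness of the fiber of the projection $X_{n+2}\to X_{n+1}$ over the isolated trajectory, a finiteness argument, and Lemma~\ref{Lemma:EssentialCharacterization}, Part~\eqref{EssentialCharacterization2}. The only difference is cosmetic: you get finiteness of that fiber by identifying it with $F(q)$ and observing that $F(q)$ is a finite union of singletons when no $\A_1$ symbol has domain $\{q\}$, whereas the paper injects the fiber into the finite set $\A_2$ via proper parametrization.
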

	\begin{proof}
		Let $u_0\cdots u_{n-1}\in\mathcal{L}_n$. It suffices to show that we may choose $u_n\in\A$ such that $u_0\cdots u_n$ is essential. The result then follows from induction.
		
		Case 1: Suppose $u_{n-1}\in\mathcal{A}_0\cup\A_1$. Then we may choose $u_n\in\A_0$ such that $D_0(u_n)\subset R_0(u_{n-1})$. Since $u_n\in\A_0$, Lemma~\ref{Lemma:EssentialCharacterization} yields that $u_0\cdots u_n$ is essential.
		
		Case 2: Suppose $u_{n-1}\in\mathcal{A}_2$. Then there is one unique trajectory $x\in X_{n+1}$ such that $(x,u)\in\mathcal{S}_{n+1}$, and from the definition of essential, $\{x\}$ is open in $X_{n+1}$. If there exists $u_n\in\A_1$ with $D(u_n)=R(u_{n-1})$, then $u_0\cdots u_n$ is an essential word extending $u_0\cdots u_{n-1}$. Otherwise,
		let $h\colon X_{n+2}\to X_{n+1}$ be projection onto coordinates $0$ through $n$ and observe that $h^{-1}(\{x\})$ is open in $X_{n+2}$. For every $y\in h^{-1}(\{x\})$, there must exist $u^y_n\in\A_2$ such that $\{(y_n,y_{n+1})\}=G(u^y_n)$. Since $\A_2$ is finite, $h^{-1}(\{x\})$ must be a finite set, so for every $y\in h^{-1}(\{x\})$, $\{y\}$ is open in $X_{n+2}$. Thus, for  any $u_n\in\A_2$ with $D(u_n)=R(u_{n-1})$, $u_0\cdots u_n$ is essential.
	\end{proof}
	
	\section{Topological transitivity and mixing}\label{Sect:TransandMixing}
	
	In this section we state and prove our first main results. The following result relates topological transitivity and density of periodic points of the forward trajectory space to properties of the associated SFT. 
	
\begin{thm} \label{Thm:Transitivity} Let $F$ be a properly parametrized Markov multi-map with forward trajectory space $X$ and associated SFT $\Sigma_M$.
		\begin{enumerate} 
			\item If $X$ is topologically transitive, then $\Sigma_M$ satisfies the Irreducibility Condition (IC).
			\item If $\Sigma_M$ satisfies the Coding Condition (CC)  and the Irreducibility Condition (IC), then $X$ is topologically transitive and has dense periodic points.
		\end{enumerate}
	\end{thm}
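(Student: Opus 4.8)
The plan is to prove the two implications by separate arguments, each time translating between the symbolic coding in $\Sigma_M$ and the geometry of trajectories via the lemmas of Section~\ref{Sect:PreliminaryResults}.

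For Part~(1), since $\A$ is finite it is enough to show that $\A^{(e)}$ is irreducible, because an irreducible set lies in a maximal one, which is an irreducible component. Fix $a,b \in \A^{(e)}$. I would first establish the anchoring fact that every essential symbol begins an essential word: for $a \in \A_0 \cup \A_1$ this is immediate from Lemma~\ref{Lemma:EssentialCharacterization}\eqref{EssentialCharacterization1}, and for $a \in \A_2$ one passes to the suffix, beginning at $a$, of an essential word containing $a$ and checks essentiality of the suffix using Remark~\ref{Remark:Homeomorphic} together with continuity of the inverse maps $g_\bullet$. Let $w^a, w^b$ be essential words beginning with $a,b$, and set $U_a = \pi_{k}^{-1}\bigl(\{x : (x,w^a) \in \mathcal{S}_{k}\}\bigr)$ and similarly $U_b$; these are nonempty and, since $w^a,w^b$ are essential, open in $X$. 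By Lemmas~\ref{Lemma:EssentialCovers} and~\ref{Lemma:essential implies unique labeling}, the essential coding of any point of $U_a$ (resp.\ $U_b$) begins with $a$ (resp.\ $b$). Applying topological transitivity gives $y \in U_a$ and $n \ge 0$ with $T^n y \in U_b$; coding $y$ by some $\mathbf{c} \in E$ and noting that $\sigma^n \mathbf{c}$ codes $T^n y$, the uniqueness of essential labelings forces $c_0 = a$ and $c_n = b$. Then $c_0 \cdots c_n$ is a word from $a$ to $b$ built from essential symbols, so $\A^{(e)}$ is irreducible. The only subtle point is the anchoring fact for $\A_2$; the rest is bookkeeping with the coding lemmas.

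For Part~(2), fix nonempty open $U,V \subset X$; after shrinking I may assume $U = \pi_m^{-1}(U_m)$, $V = \pi_{m'}^{-1}(V_{m'})$ with $U_m, V_{m'}$ open and nonempty preimage. Lemma~\ref{Lemma:SpecialLabeledTrajectories} yields special labeled trajectories $(z,\alpha) \in \mathcal{S}_m$ and $(w,\beta) \in \mathcal{S}_{m'}$ with $z \in U_m$, $w \in V_{m'}$, and $\alpha,\beta$ spelled with essential symbols; by (IC) their relevant endpoints lie in a common irreducible component $\C \supseteq \A^{(e)}$. To witness transitivity I would build a single trajectory $x$ whose first $m$ coordinates lie in $U_m$ and which, after $n$ steps, realizes the window $w$ exactly, so that $T^n x \in V$. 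The mechanism is to prescribe the coding as $\alpha$, followed by a connecting word, followed by $\beta$, and to determine $x$ by the requirement that its $\beta$-window equal $w$. The connecting word starts with a long prefix of a $Z$-coding issued from the point $z_{m-1}$, which exists by Definition~\ref{Def:CC}\eqref{CC1}; by property~(2) of Definition~\ref{Def:CC} and the nesting~\eqref{Iu}, this makes the interval $I$ of the connecting word small, so the resulting $\alpha$-window is forced into the neighborhood of $z$ that corresponds, via Remark~\ref{Remark:Homeomorphic}, to $U_m$. Transitivity of $Z$ on symbols and irreducibility of $\C$ are then used to complete the connecting word so that it splices onto $\beta$ at the point $w_0$.

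For dense periodic points the construction is parallel: using irreducibility of $\C$ I close the coding of $\alpha$ into a loop, inserting a long $Z$-block so that the composition of inverse maps around the period is a contraction (its Lipschitz constant on the fixed frame is bounded, while the $Z$-block shrinks the image by property~(2) of Definition~\ref{Def:CC}); its unique fixed point seeds a genuine periodic trajectory, which the same pinning places inside $U_m$. The main obstacle in Part~(2) is exactly this realization step: once a coding and one endpoint are fixed a trajectory is rigid, so the initial and terminal windows cannot be chosen independently, and one must interleave the metric contraction and covering supplied by $Z$ with the purely combinatorial connectivity supplied by $\C$. Verifying that the splices between the $Z$-prefix, the connector in $\C$, and the essential words $\alpha,\beta$ are simultaneously admissible --- in particular handling the boundary cases where the splice points are partition endpoints or where the relevant domains are singletons --- is where the bulk of the care is required.
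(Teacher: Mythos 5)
Your Part (1) follows the paper's strategy (open windows cut out by essential words, transitivity, then Lemma~\ref{Lemma:essential implies unique labeling} to pin the coding), but the ``anchoring fact'' you rest it on is a genuine gap: it is false in general that every essential symbol begins an essential word, and your sketched justification --- that the suffix of an essential word is essential --- already fails. If $u_{n-1}\in\A_2$, essentiality of $u$ means the unique trajectory $x$ with $(x,u)\in\mathcal{T}_{n+1}$ is isolated (Lemma~\ref{Lemma:EssentialCharacterization}\eqref{EssentialCharacterization2}), and that isolation can be caused entirely by the \emph{initial} symbols, which the suffix discards. Concretely, let $c\in\A_2$ code an isolated point $(p,q)$ of $G(F)$ and let $d\in\A_2$ code the endpoint $(q,r)$ of an arc $G(e)$, $e\in\A_0$, with $r$ isolated in $F(q)$: then $cd$ is essential (any nearby trajectory must start exactly at $(p,q)$ and hence equal $x$), while the suffix $d$ is not, since its unique trajectory $(q,r)$ is a non-isolated point of $X_2=G(F)$. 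Worse, by supplying arcs that shadow, from an appropriate side, every admissible $\A_2$-continuation of $d$, one can arrange that \emph{no} word beginning with $d$ is essential, so $d\in\A^{(e)}$ begins no essential word and your sets $U_a$, $U_b$ cannot even be defined. Your backward-extension idea (Remark~\ref{Remark:Homeomorphic} plus continuity of the $g_\bullet$) works only while the preceding symbols lie in $\A_0\cup\A_1$, where $R_0$ is an open interval; it stalls at a preceding $\A_2$ symbol, which is exactly the situation above. The repair is to do what the paper does: take essential words $u,v$ merely \emph{containing} $a$ and $b$ (made the same length via Lemma~\ref{Lemma:samelength}), build the windows from the full words, and use the standard fact that in a transitive system the transfer time $k$ can be taken at least the word length, so the position of $a$ inside the $u$-window precedes the position of $b$ inside the shifted $v$-window. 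With that change, your nice additional twist --- coding the transitive point by Lemma~\ref{Lemma:EssentialCovers} so that the connecting word consists entirely of essential symbols, which is slightly stronger than the paper's conclusion (the paper connects through arbitrary words of $\mathcal{L}$ and then absorbs the connectors into the component) --- goes through intact.

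Your Part (2) is essentially the paper's proof: approximate via Lemma~\ref{Lemma:SpecialLabeledTrajectories}, start a $Z$-coding at the junction point using property (1) of Definition~\ref{Def:CC}, use property (2) plus the nesting \eqref{Iu} to make the relevant $I$-interval shorter than the continuity modulus of the finitely many maps $g_{u^1},g_{v^1}$, bridge with the irreducible component $\C_0\supseteq\A^{(e)}$ (for which you should record, as the paper does, that $Z\subset\Sigma_M(\C_0)$), and pull back through the inverse maps; the splice edge cases you flag are likewise glossed in the paper. The one misstep is in the periodic-point construction: you claim the composition of inverse maps around the loop is a \emph{contraction} with a unique fixed point, but (CC) gives no Lipschitz control whatsoever on the family $\{g_u\}$ --- that is precisely the extra equicontinuity hypothesis of Theorem~\ref{Thm:Specification}, not available here. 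Fortunately contraction is unnecessary: as in the paper, the loop map is a continuous self-map of a compact interval (its image lands in $D(w_0)\subset R(v^1_{n-1})$), so it has a fixed point by the intermediate value theorem, and it is the smallness of $I_w$ from (CC)(2) --- not any contraction --- that pins the resulting periodic window inside $U_m$. With that substitution your Part (2) matches the paper's argument.
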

	
	As a consequence of this result, we have that if the Coding Condition (CC) holds, then the Irreducibility Condition (IC) is necessary and sufficient for the forward trajectory space to be topologically transitive. Our next main result address the topological mixing of the forward trajectory space.
			
	\begin{thm} \label{Thm:Mixing} Let $F$ be a properly parametrized Markov multi-map with forward trajectory space $X$ and associated SFT $\Sigma_M$.
		\begin{enumerate} 
			\item If $X$ is topologically mixing, then $\Sigma_M$ satisfies the Mixing Condition (MC).
			\item If $\Sigma_M$ satisfies the Coding Condition (CC) and the Mixing Condition (MC), then $X$ is topologically mixing and has dense periodic points.
		\end{enumerate}
	\end{thm}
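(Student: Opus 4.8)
The plan is to mirror the proof of Theorem~\ref{Thm:Transitivity}, upgrading every appeal to irreducibility of a component to the stronger mixing property, and to extract dense periodic points for free.

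\emph{Part 1.} Since topological mixing implies topological transitivity, Theorem~\ref{Thm:Transitivity}(1) already gives an irreducible component $\C \subseteq \A$ with $\A^{(e)} \subseteq \C$. It therefore suffices to show that $\C$ is mixing; by the standard theory of irreducible SFTs (see \cite{Lind1995}), $\C$ is mixing if and only if its period---the gcd of the lengths of its cycles---equals $1$, and for an irreducible graph this gcd may be computed from the cycles through any single state. I would fix a symbol $a \in \A_0$ (note $\A_0 \neq \varnothing$, since by condition~(6) of Definition~\ref{Defn:MarkovMultiMap} the singletons $D(a)$ for $a \in \A_1 \cup \A_2$ cannot cover $[0,1]$); by Lemma~\ref{Lemma:EssentialCharacterization}(1) the length-one word $a$ is essential and $a \in \A^{(e)} \subseteq \C$. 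Set $U = \pi_2^{-1}(\{x \in X_2 : (x,a) \in \mathcal S_2\})$, an open, nonempty subset of $X$. Applying topological mixing to the pair $(U,U)$ produces $N$ such that for every $n \geq N$ there is $x \in U$ with $T^n x \in U$, so $(x_0,x_1) \in G_0(a)$ and $(x_n,x_{n+1}) \in G_0(a)$. Coding $x$ by an essential sequence via Lemma~\ref{Lemma:EssentialCovers} and applying the uniqueness of Lemma~\ref{Lemma:essential implies unique labeling} at coordinates $0$ and $n$ forces the code $\mathbf c \in E$ to satisfy $c_0 = c_n = a$; hence $c_0 \cdots c_n$ is a closed walk of length $n$ through $a$ inside $\C$. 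As this holds for all $n \geq N$, the cycle lengths through $a$ include $N$ and $N+1$, so their gcd is $1$ and $\C$ is mixing. Since a mixing set is in particular irreducible, the maximal irreducible set $\C$ is also a maximal mixing set, i.e.\ a mixing component, and $\A^{(e)} \subseteq \C$ establishes (MC).

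\emph{Part 2.} First, (MC) implies (IC): a mixing component is irreducible, hence contained in an irreducible component, which still contains $\A^{(e)}$. Thus Theorem~\ref{Thm:Transitivity}(2) applies and already yields that $X$ has dense periodic points; it remains only to prove topological mixing. Given nonempty open $U,V \subseteq X$, I reduce to basic sets $U = \pi_m^{-1}(U_m)$ and $V = \pi_\ell^{-1}(V_\ell)$ and invoke Lemma~\ref{Lemma:SpecialLabeledTrajectories} to obtain special essential labeled trajectories $(z,\alpha) \in \mathcal S_m$ with $z \in U_m$ and $(w,\beta) \in \mathcal S_\ell$ with $w \in V_\ell$, so that $\alpha_{m-2}, \beta_0 \in \A^{(e)} \subseteq \C$. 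Writing $O_U \subseteq R_0(\alpha_{m-2})$ and $O_V \subseteq R_0(\beta_{\ell-1})$ for the open sets of endpoints whose $\alpha$- and $\beta$-realizations (via Remark~\ref{Remark:Homeomorphic}) land in $U_m$ and $V_\ell$ respectively, the goal for each large $n$ is a special trajectory that realizes $\alpha$ on coordinates $0,\dots,m-1$ with endpoint in $O_U$ and realizes $\beta$ on coordinates $n,\dots,n+\ell-1$ with endpoint in $O_V$.

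The trajectory is assembled in three blocks: an initial $\alpha$-block forcing membership in $U$, a connecting block, and a $\beta$-block placed at time $n$ forcing membership in $V$, after which the trajectory is continued arbitrarily (using (CC)(1) to begin a coded trajectory from the chosen endpoint $\xi \in O_V$). The $V$-end is handled by choosing $\xi \in O_V$ directly. For the $U$-end I would use (CC): the intervals $\{I_u : u \in \mathcal L_k(Z)\}$ cover $[0,1]$ for every $k$ (by (CC)(1), since any $y$ is the start of a $Z$-coded trajectory and hence lies in $I_{a_0\cdots a_{k-1}}$) and have maximal length tending to $0$ by (CC)(2); consequently, for $k$ large some $Z$-word $\rho \in \mathcal L_k(Z)$ has $I_\rho \subseteq O_U$, so that \emph{every} realization of $\rho$ begins at a point of $O_U$. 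Inserting such a $\rho$ immediately after $\alpha$ pins the $\alpha$-endpoint into $O_U$ and hence places the trajectory in $U$. Finally, the crucial point: because $\C$ is \emph{mixing}, there is $N_\C$ such that the remaining gap---from the end of the pinning block to $\beta_0$---can be bridged by a word in $\mathcal L(\C)$ of \emph{every} length at least $N_\C$. This is exactly the upgrade over the irreducibility used for Theorem~\ref{Thm:Transitivity}(2): it lets $\beta_0$ be placed at coordinate $n$ for every sufficiently large $n$ rather than only for $n$ in a fixed congruence class, which is precisely topological mixing.

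I expect the main obstacle to be reconciling the subshift $Z$ supplied by (CC)---which controls precision and expansion but whose symbols need not lie in the mixing component $\C$---with the combinatorics of $\C$ supplied by (MC). Concretely, after the $Z$-word $\rho$ pins the start into $O_U$, one must route the trajectory into $\C$ (so that $\C$-mixing can be applied to control the gap length) while keeping the whole concatenation a valid special trajectory coded by essential symbols; here I would lean on Lemma~\ref{Lemma:EssentialCovers} to recode the realized trajectory essentially (landing in $\A^{(e)} \subseteq \C$) and on the Markov property to verify all the transitions between blocks. Managing this grafting, together with the off-by-one bookkeeping that converts ``connectors of every length $\geq N_\C$'' into ``$T^n U \cap V \neq \varnothing$ for every $n \geq N$,'' is the technical heart of the argument; the remaining steps are the same continuity-and-homeomorphism estimates (Remark~\ref{Remark:Homeomorphic}) already used in the transitivity proof.
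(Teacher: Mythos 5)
Your proposal is correct overall, and Part 1 takes a genuinely different route from the paper. The paper proves Part 1 directly: for each pair $a,b\in\A^{(e)}$ it applies topological mixing to the pair $(W_a,W_b)$ of cylinder sets over essential words containing $a$ and $b$, and reads off, via Lemma~\ref{Lemma:essential implies unique labeling}, connecting words of every sufficiently large length, verifying the mixing property of the component pair by pair. You instead piggyback on Theorem~\ref{Thm:Transitivity}(1) to get the irreducible component $\C\supseteq\A^{(e)}$ and then kill the period: one application of mixing, to $(U,U)$ for a single symbol $a\in\A_0$, yields closed walks through $a$ of two consecutive lengths, and irreducible plus aperiodic equals mixing by standard Perron--Frobenius theory. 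This is slicker and uses the mixing hypothesis only once, at the cost of importing the period/gcd machinery from \cite{Lind1995}, which the paper never invokes; both arguments are sound, and your closed walks even stay inside $\C$ automatically since the code supplied by Lemma~\ref{Lemma:EssentialCovers} consists of essential symbols. For Part 2 your argument is essentially the paper's: MC implies IC, so dense periodic points come from Theorem~\ref{Thm:Transitivity}(2) (a point the paper leaves implicit), and the three-block construction --- approximating words from Lemma~\ref{Lemma:SpecialLabeledTrajectories}, a $Z$-word to localize, and a $\C_0$-connector of every length $j\geq N_2$ --- is exactly what the paper does.

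Two divergences in Part 2 deserve mention, both fixable. First, your pinning step --- find $\rho\in\mathcal{L}_k(Z)$ with $I_\rho\subseteq O_U$ --- needs $O_U$ to have nonempty interior in $[0,1]$, which fails when $\alpha$ ends with a symbol of $\A_2$ (then $R_0(\alpha_{m-2})$ is a singleton). The paper avoids this by not pinning into an open set at all: it merely chooses $w$ with $x^1_m\in I_w$ and $\ell(I_w)<\delta$, where $\delta$ comes from continuity of $g_{u^1}$ and $g_{v^1}$, and then verifies $\eps$-closeness through the criterion of Lemma~\ref{Lemma:MixingCharacterization}; your version needs the same $\delta$-bookkeeping (or a reduction to the nondegenerate case) to handle an $\A_2$ endpoint. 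Second, the grafting obstacle you flag --- reconciling the subshift $Z$ from (CC) with the mixing component $\C_0$ --- requires no recoding via Lemma~\ref{Lemma:EssentialCovers}: some symbol of $\A_0$ occurs in $Z$ (take a code starting at $y\notin P$), $\A_0\subseteq\A^{(e)}\subseteq\C_0$, and $Z$ is transitive on symbols, so every symbol of $Z$ connects in both directions to a symbol of $\C_0$ and maximality gives $Z\subseteq\Sigma_M(\C_0)$. This is the one-line observation with which the paper opens its proof of Part 2, and it places all of your blocks inside $\mathcal{L}(\C_0)$ from the start, so the Markov-property checks at the seams are the only remaining verification.
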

	
	In the presence of the Coding Condition (CC), Theorem \ref{Thm:Mixing} yields that the Mixing Condition (MC) is necessary and sufficient for the forward trajectory space to be topologically mixing.
	
	Before we prove these theorems, let us state some equivalent conditions for the topological transitivity and mixing of the forward trajectory space. Observe that for all $x\in X$, $n\geq0$, and $\eps>0$, the set
	\[
	U_n(x,\epsilon) = \{ x' \in X \colon  \forall k = 0,\dots,n-1, \, |x_k - x'_k| < \epsilon \}
	\]
	is open in $X$. In fact, sets of this form are a basis for the topology on $X$. As such, the following lemmas are straightforward and we omit their proofs. Our first lemma characterizes topological transitivity and density of periodic orbits.
	
	\begin{lemma}\label{Lemma:TransCharacterization}
		$X$ is topologically transitive if and only if $\forall x \in X_{m}$, $\forall y \in X_{n}$, and $\forall \epsilon >0$, there exists $N$ and $z \in X_{m + N + n}$ such that
		\begin{itemize}
			\item for all $k = 0, \dots, m-1$, we have $|x_k - z_k| < \epsilon$, and
			\item for all $k = 0,\dots,n-1$, we have $|y_k - z_{m+N+k}| < \epsilon$.
		\end{itemize}
		
		Similarly, the periodic trajectories are dense in $X$ if and only if for all $ x\in X_m$ and $ \eps>0$, there exists $N$ and $z\in X_{2m+N}$ such that for all $k=0,\ldots,m-1$, we have $z_{k+N}=z_k$ and $|x_k-z_k|<\eps$.
	\end{lemma}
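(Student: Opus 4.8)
The plan is to prove both equivalences by passing between a point of $X$ and its finite windows, using two structural facts. First, as observed just before the statement, the cylinder sets $U_n(x,\eps)$ form a basis for the topology of $X$, so topological transitivity and density of periodic points need only be tested on such basic open sets. Second, every finite trajectory extends to an infinite one: condition~(6) of Definition~\ref{Defn:MarkovMultiMap} gives $[0,1]=\bigcup_{a\in\A}D(a)$, hence $F(x)\neq\varnothing$ for every $x$, and so any $(x_0,\dots,x_{k-1})\in X_k$ may be continued coordinate-by-coordinate to a point of $X$. Thus $\pi_k(X)=X_k$, while conversely every length-$k$ window of a point of $X$ lies in $X_k$. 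Together these let me move freely between assertions about $w\in X$ with prescribed windows and assertions about finite trajectories $z\in X_k$.

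For transitivity I would unwind the definition on the basis: $X$ is topologically transitive iff for all $p,q\in X$ and all $m,n,\eps$ there exist $j\geq 0$ and $w\in X$ with $|w_k-p_k|<\eps$ for $k<m$ and $|w_{j+k}-q_k|<\eps$ for $k<n$. Writing $x=\pi_m(p)$, $y=\pi_n(q)$ and $N=j-m$, the window $z=(w_0,\dots,w_{m+N+n-1})$ is exactly the finite trajectory demanded by the lemma. The reverse implication is then immediate: given such a $z$, extend it to $w\in X$; then $w\in U_m(p,\eps)$ and $T^{m+N}w\in U_n(q,\eps)$, so $T^{m+N}U_m(p,\eps)\cap U_n(q,\eps)\neq\varnothing$. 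For the forward implication the only thing to arrange is that the approximation of $y$ occurs at a time $j\geq m$, so that $N=j-m\geq0$ and the two windows are disjoint.

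This placement is the main obstacle, since approximate visits cannot be glued and one must realize both windows inside a single orbit. I would dispatch it using a point $\omega\in X$ with dense forward orbit (which a topologically transitive compact system possesses) and selecting a visit of $\omega$ to $U_n(q,\eps)$ occurring after a visit to $U_m(p,\eps)$; the only delicate point is checking that a late enough visit to $U_n(q,\eps)$ exists. The periodic-point characterization is cleaner and needs no such care. For its reverse direction, given the finite datum $z\in X_{2m+N}$, the relation $z_{N}=z_0$ (the case $k=0$) closes the block $z_0\cdots z_{N-1}$ into a loop in $G(F)$, and repeating that block produces a genuine periodic point $\hat z\in X$; applying $z_{k+N}=z_k$ repeatedly for $k<m$ shows $\hat z_k=z_k$, so $\hat z$ lies in the prescribed neighborhood. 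For the forward direction I would take a periodic $\hat z$ of some period $P$ in a basic neighborhood of a lift of $x$, set $N=P$, and read off the window $(\hat z_0,\dots,\hat z_{2m+N-1})\in X_{2m+N}$; periodicity gives $z_{k+N}=\hat z_{k+P}=\hat z_k=z_k$ for $k<m$, as required. Apart from the single placement-of-windows step in the transitivity argument, every step is a direct unwinding of the definitions.
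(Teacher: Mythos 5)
The paper offers no proof to compare against: it declares this lemma (and Lemma~\ref{Lemma:MixingCharacterization}) straightforward and omits the argument, so your proposal has to stand on its own completeness. Most of it does. The reduction to the cylinder basis $U_n(x,\eps)$, the observation that every finite trajectory extends to a point of $X$ (so $\pi_k(X)=X_k$), the reverse implication for transitivity, and both directions of the periodic-point characterization are correct; in particular your closing-the-loop argument via $z_N=z_0$ and the induction $z_i=z_{i-N}$ is exactly what is needed.

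The genuine gap is in the forward implication for transitivity, at precisely the step you flag and then leave unresolved: why does the dense orbit of $\omega$ visit $U_n(q,\eps)$ at some time at least $s+m$, where $s$ is the chosen visit to $U_m(p,\eps)$? A dense forward orbit is only guaranteed to meet each nonempty open set \emph{once}; if $X$ had an isolated point, a tail of the orbit could miss $U_n(q,\eps)$ entirely, and the paper's definition of transitivity (some $n\geq 0$, which is vacuous when $U=V$) does not by itself produce arbitrarily large hitting times. The missing ingredient is the classical dichotomy: a topologically transitive compact metric system either is a single periodic orbit or has no isolated points. Sketch: if $x_0$ is isolated, transitivity applied to pairs $(\{x_0\},V)$ makes the orbit of $x_0$ dense, and applied to pairs $(W,\{x_0\})$ makes the open set $\bigcup_{n\geq 0}T^{-n}(\{x_0\})$ dense; if some $T^{-n}(\{x_0\})$ with $n\geq 1$ is nonempty, it is open, so the dense orbit of $x_0$ enters it and $x_0$ is periodic, whence $X$ equals this finite orbit; otherwise the dense set reduces to $\{x_0\}$, so $X=\{x_0\}$, again a periodic orbit. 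Since the trajectory space of a Markov multi-map is uncountable (condition~(6) of Definition~\ref{Defn:MarkovMultiMap} forces $\A_0\neq\varnothing$, and every point of a nondegenerate $D(a)$ starts a trajectory), transitivity rules out isolated points; then every nonempty open subset of $X$ is infinite, dense sets remain dense after deleting finitely many points, so the tail $\{T^k\omega\colon k\geq s+m\}$ is still dense and the late visit exists. Without this argument (or a citation of the dichotomy), your proof of the ``only if'' direction is incomplete, and it is the one step of the lemma that is not pure bookkeeping.
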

	
	Our next lemma characterizing topological mixing.
	
	\begin{lemma}\label{Lemma:MixingCharacterization}
		$X$ is topologically mixing if and only if $\forall x \in X_{m}$, for all $ y \in X_{n}$ and $ \epsilon >0$, there exists $N$ such that for all $j\geq N$, there exists $z \in X_{m + j + n}$ such that
		\begin{itemize}
			\item for all $k = 0, \dots, m-1$, we have $|x_k - z_k| < \epsilon$, and
			\item for all $k = 0,\dots,n-1$, we have $|y_k - z_{m+j+k}| < \epsilon$.
		\end{itemize}
	\end{lemma}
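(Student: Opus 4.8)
The plan is to verify the definition of topological mixing directly, exploiting the stated fact that the sets $U_m(x,\epsilon)$ form a basis for the topology on $X$. Since mixing is a condition on all pairs of non-empty open sets, and any non-empty open set contains a basic set, it suffices to check the mixing condition for pairs of basic sets $U=U_m(a,\epsilon)$ and $V=U_n(b,\epsilon)$ (this is the standard reduction: shrinking $U,V$ to basic subsets only makes $T^s(U)\cap V$ smaller). Moreover, a basic set $U_m(a,\epsilon)$ depends on $a\in X$ only through its first $m$ coordinates $(a_0,\dots,a_{m-1})\in X_m$, and conversely every finite trajectory in $X_m$ arises this way. Thus the quantifier over basic open sets translates exactly into the quantifiers over $x\in X_m$, $y\in X_n$, and $\epsilon>0$ in the lemma, and I would begin by recording this dictionary precisely.

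Next I would unwind the meaning of $T^s(U)\cap V\neq\varnothing$ for $U=U_m(x,\epsilon)$ and $V=U_n(y,\epsilon)$. Because $T$ is the left shift, $(T^sw)_k=w_{s+k}$, so this intersection is non-empty precisely when there is an infinite trajectory $w\in X$ with $|w_k-x_k|<\epsilon$ for $0\le k<m$ and $|w_{s+k}-y_k|<\epsilon$ for $0\le k<n$. Writing $s=m+j$ and restricting $w$ to its first $m+j+n$ coordinates yields exactly the finite trajectory $z\in X_{m+j+n}$ demanded by the lemma, while the clause ``for all $s\ge N$'' becomes ``for all $j\ge N-m$.'' This matches the two statements up to shifting the threshold by $m$; one may always enlarge $N$ so that $N\ge m$, which forces $j\ge 0$ and also makes the two coordinate windows $\{0,\dots,m-1\}$ and $\{m+j,\dots,m+j+n-1\}$ disjoint, so no overlap issues arise.

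The one genuine point—the content hidden behind ``straightforward''—is the passage between finite and infinite trajectories. For the forward implication I restrict an infinite trajectory to a finite one, which is immediate. For the converse I must extend a finite trajectory $z\in X_{m+j+n}$ to an infinite $w\in X$ agreeing with $z$ on its first $m+j+n$ coordinates; the approximation conditions then persist, since they involve only coordinates inside $\{0,\dots,m-1\}\cup\{m+j,\dots,m+j+n-1\}$. Such an extension exists because $F$ is total: condition~(6) of Definition~\ref{Defn:MarkovMultiMap} gives $[0,1]=\bigcup_{a\in\mathcal{A}}D(a)$, so every point lies in some $D(a)$ and hence has at least one $F$-image, allowing one to append successors indefinitely. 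Equivalently, the projection $\pi_{m+j+n}\colon X\to X_{m+j+n}$ is surjective. Assembling the basis reduction, the bookkeeping $s=m+j$, and this restriction/extension correspondence gives the biconditional; the transitive case of Lemma~\ref{Lemma:TransCharacterization} is handled identically, with the gap existentially rather than universally quantified.
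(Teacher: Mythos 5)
Your proposal is correct and is essentially the argument the paper has in mind: the paper omits the proof as ``straightforward'' once it has noted that the sets $U_n(x,\epsilon)$ form a basis for the topology on $X$, and your write-up supplies exactly that argument (reduction to basic open sets, the bookkeeping $s=m+j$, and the restriction/extension correspondence between finite and infinite trajectories, with extension justified by condition (6) of Definition~\ref{Defn:MarkovMultiMap}). You also correctly isolate the one nontrivial ingredient---surjectivity of the projections $\pi_k\colon X\to X_k$---which is in fact used in both directions (it guarantees non-emptiness of the basic open sets in the forward implication, as your ``dictionary'' implicitly records, as well as permitting the extension of $z$ in the converse).
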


	\subsection{Topologial Transitivity and Density of Periodic Points}
	
	In this section, we prove Theorem \ref{Thm:Transitivity}. We prove the two parts (1) and (2) separately.
	
	\vspace{2mm}
	
	\begin{Pfof1-1-1}
		Assume the hypotheses of the theorem, and suppose $X$ is topologically transitive. To show that $\Sigma_M$ satisfies the Irreducibility Condition, it suffices to show that any two elements of $\A^{(e)}$ are connected by a word in $\mathcal{L}$. Let $a,b\in\A^{(e)}$, and let $u,v\in\mathcal{L}_n$ be essential words containing $a$ and $b$ respectively. (We are able to assume that $u$ and $v$ are the same length by Lemma~\ref{Lemma:samelength}.)  Let 
		\begin{align*}
			U_a&=\left\{x\in X_{n+1}\colon (x,u)\in\mathcal{S}_{n+1}\right\}\text{ and }\\
			U_b&=\left\{x\in X_{n+1}\colon (x,v)\in\mathcal{S}_{n+1}\right\}.
		\end{align*}
		Then $U_a$ and $U_b$ are nonempty open subsets of $X_{n+1}$. It follows that
		\begin{align*}
			W_a&=\left(U_a\times\prod_{i=n+1}^\infty[0,1]\right)\cap X\text{ and }\\
			W_b&=\left(U_b\times\prod_{i=n+1}^\infty[0,1]\right)\cap X
		\end{align*}
		are nonempty open subsets of $X$. Since $X$ is topologically transitive, there exists $k\geq n$ and a point $x\in\sigma^k(W_a)\cap W_b$.
		
		Thus $(x_0,\ldots, x_n)\in U_a$, and $(x_k,\ldots, x_{n+k})\in U_b$, so there must be some word $w=w_0\cdots w_{n+k-1}$ with $(x_i,x_{i+1})\in G(w_i)$ for all $0\leq i\leq n+k-1$. Thus by Lemma~\ref{Lemma:essential implies unique labeling}, $w_i=u_i$ for all $0\leq i\leq n-1$, and $w_i=v_{i-k}$ for all $k\leq i\leq n+k-1$. Thus $w$ contains a word beginning with $a$ and ending with $b$.
		
		Since this is true for all $a,b\in\A^{(e)}$, there is an irreducible component $\C_0$ containing $\A^{(e)}$ and $\Sigma_M$ satisfies the Irreducibility Condition.
	\end{Pfof1-1-1}~

	\begin{Pfof1-1-2}
		Suppose $\Sigma_M$ satisfies the Coding Condition and the Irreducibility Condition. Let $\mathcal{C}_0$ be the irreducible component with $\mathcal{A}^{(e)} \subset \mathcal{C}_0$, and let $Z\subset\Sigma_M$ be the  given by the Coding Condition. It follows from Property~\ref{CC1} of Definition~\ref{Def:CC} that every element of $\A_0$ appears in a sequence in $Z$. Also, $\A_0 \subset \A^{(e)}$. Then by the fact that $Z$ is transitive on symbols, we have that $Z\subset\Sigma_M(\C_0)$.
		
		Let $n,m\geq 0$, $x \in X_{m+1}$, $y \in X_{n+1}$, and $\epsilon >0$. By Lemma~\ref{Lemma:SpecialLabeledTrajectories}, there exist $(x^1,u^1) \in \mathcal{S}_{m+1}(\mathcal{C}_0)$ and $(y^1,v^1) \in \mathcal{S}_{n+1}(\mathcal{C}_0)$ such that 
		\begin{equation*}
			|x_k - x^1_k| < \frac{\eps}{2}, \quad \forall k = 0,\dots, m,
		\end{equation*}
		and
		\begin{equation*}
			|y_k - y^1_k| <\frac{\eps}{2}, \quad \forall k = 0, \dots, n.
		\end{equation*} 
		Using the continuity of $g_{u^1}$ and $g_{v^1}$, choose $\delta>0$ such that 
		\begin{itemize}
			\item if $x'_{m} \in R(u^1_{m-1})$ and $|x^1_m - x'_m| < \delta$ then $|x^1_k - x'_k| < \epsilon/2$ for all $k = 0,\dots, m-1$, where $x'_k = g_{u^1_k}\circ\cdots g_{u^1_{m-1}}(x'_m)$, and 
			\item if $y'_{n} \in R(v^1_{n-1})$ and $|y^1_n - y'_n| < \delta$ then $|y^1_k - y'_k| < \epsilon/2$ for all $k = 0,\dots, n-1$, where $y'_k = g_{v^1_k}\circ\cdots g_{v^1_{n-1}}(y'_n)$. 
		\end{itemize}

		By the Coding Condition, there exists  $N\geq 1$ such that for every word $w \in \mathcal{L}_{N}(Z)$ we have $\ell(I_{w}) < \delta$. Fix $w\in\mathcal{L}_{N}(Z)$ such that $x^1_m\in I_{w}$. Since $\C_0$ is irreducible, there exists $n'\geq 0$ and  $v' \in \mathcal{L}_{n'}(\mathcal{C}_0)$ such that $w v' v^1 \in \mathcal{L}(\mathcal{C}_0)$.
		
		Let $J=m+N+n'+n$. We will construct a finite trajectory $z\in X_J$ as per Lemma~\ref{Lemma:TransCharacterization}. Choose any $z_J\in R(u^1_{n-1})$ with $|z_J-y^1_n|<\delta$. For every $k=1,\ldots,n$, define $z_{J-k}=g_{v^1_{n-k}}\circ\cdots\circ g_{u^1{n-1}}(z_J)$. From our choice of $\delta$, we have that $|z_{J-k}-y^1_{n-k}|<\eps/2$ for all $k=0,\ldots,n$. Next, for each $k=1,\ldots,n'$, let $z_{J-n-k}=g_{v'_{n'-k}}\circ\cdots\circ g_{v'_{n'-1}}(z_{J-n})$. Similarly, for each $k=1,\ldots,N$, let $z_{J-n-n'-k}=g_{w_{N-k}}\circ\cdots\circ g_{w_{N-1}}(z_{J-n-n'})$.
		
		Observe that $J-n-n'-N=m$, and $z_m,x^1_m\in I_{w}$, so $|z_m-x^1_m|<\delta$. We continue on to define, for all $k=1,\ldots,n$, $z_{m-k}=g_{u^1_{n-k}}\circ\cdots\circ g_{u^1_{m-1}}(x^1_m)$, and we conclude, again from our choice of $\delta$, that for all $k=0,\ldots,m$, $|z_k-x^1_k|<\eps/2$.
		
		Thus we have constructed $z\in X_{J}$ such that for all $k=0,\ldots,m$, $|z_k-x^1_k|<\eps/2$, and for all $k=0,\ldots,n$, $|z_{m+N+n'+k}-y^1_k|<\eps/2$. Then by the triangle inequality and our choice of $x^1$ and $y^1$, we get $|z_k-x_k|<\eps$ for all $k=0,\ldots,m$ and $|z_{m+N+n'+k}-y_k|<\eps$ for all $k=0,\ldots,n$. Thus by Lemma~\ref{Lemma:TransCharacterization}, $X$ is topologically transitive.

		To see that periodic trajectories are dense in $X$, consider the previous proof but with $x=y$, $x^1=y^1$, and $u^1=v^1$. Then we get that $g_{wv'v}$ is a continuous function whose domain is the compact interval $R(v^1_{n-1})$ and whose codomain is  $D(w_0)\subset R(u^1_{m-1})=R(v^1_{n-1})$. As a continuous function from a compact interval to itself must a fixed point, we may choose our original point $z_J$ to be a fixed point of this function. Then the resulting finite trajectory $z\in X_{m+N+n'+n}$ is periodic.
	\end{Pfof1-1-2}
	
	\subsection{Topological Mixing}
	Here we present the proofs regarding topological mixing, which are very similar to those concerning topological transitivity.
	
	\vspace{2mm}
	
	\begin{Pfof1-2-1}
		Suppose $X$ is topologically mixing. Let $a,b\in\A^{(e)}$, and let $u,v\in\mathcal{L}_n$ be essential words containing $a$ and $b$ respectively. Let $0\leq j_1,j_2\leq n-1$ be integers with $u_{j_1}=a$ and $v_{j_2}=b$. Define the sets $U_a$, $U_b$, $W_a$, and $W_b$ as in the proof of Theorem~\ref{Thm:Transitivity} Part (1).
		
		Since $X$ is topologically mixing, there exists $N\geq n$ such that for all $k\geq N$, there exists a point $x^k\in\sigma^k(W_a)\cap W_b$. Then there is a word $w^k=w^k_0\cdots w^k_{n+k-1}$ with $(x^k_i,x^k_{i+1})\in G(w^k_i)$ for all $0\leq i\leq n+k-1$. By Lemma~\ref{Lemma:essential implies unique labeling}, this implies that $w^k_i=u_i$ for all $0\leq i\leq n-1$ and $w^k_i=v_{i-k}$ for all $k\leq i\leq n+k-1$.
		
		Then in particular $w_{j_1}w_{j_1+1}\cdots w_{k+j_2}$ is a word of length $j_2-j_1+k$ beginning with $a$ and ending with $b$. It follows that there is a word from $a$ to $b$ of any length $j\geq N+j_2-j_1$. Since this holds for all $a,b\in\A^{(e)}$, there is a mixing component $\C_0$ containing $\A^{(e)}$.
	\end{Pfof1-2-1}~

	\begin{Pfof1-2-2}
		Suppose $\Sigma_M$ satisfies the Coding Condition and the Mixing Condition. Let $\mathcal{C}_0$ be the mixing component with $\mathcal{A}^{(e)} \subset \mathcal{C}_0$, and let $Z\subset\Sigma_M$ be the subshift guaranteed by the Coding Condition. As in the proof of Theorem \ref{Thm:Transitivity}, we note that $Z \subset \Sigma_M(\C_0)$. 
		
		Let $n,m\geq 0$, $x \in X_{m+1}$, $y \in X_{n+1}$, and $\epsilon >0$. By Lemma~\ref{Lemma:SpecialLabeledTrajectories}, there exist $(x^1,u^1) \in \mathcal{S}_{m+1}(\mathcal{C}_0)$ and $(y^1,v^1) \in \mathcal{S}_{n+1}(\mathcal{C}_0)$ such that 
		\begin{equation*}
			|x_k - x^1_k| < \frac{\eps}{2}, \quad \forall k = 0,\dots, m,
		\end{equation*}
		and
		\begin{equation*}
			|y_k - y^1_k| < \frac{\eps}{2}, \quad \forall k = 0, \dots, n.
		\end{equation*} 
		Using the continuity of $g_{u^1}$ and $g_{v^1}$, choose $\delta>0$ such that 
		\begin{itemize}
			\item if $x'_{m} \in R(u^1_{m-1})$ and $|x^1_m - x'_m| < \delta$ then $|x^1_k - x'_k| < \epsilon/2$ for all $k = 0,\dots, m-1$, where $x'_k = g_{u^1_k}\circ\cdots g_{u^1_{m-1}}(x'_m)$, and 
			\item if $y'_{n} \in R(v^1_{n-1})$ and $|y^1_n - y'_n| < \delta$ then $|y^1_k - y'_k| < \epsilon/2$ for all $k = 0,\dots, n-1$, where $y'_k = g_{v^1_k}\circ\cdots g_{v^1_{n-1}}(y'_n)$. 
		\end{itemize}

		By the Coding Condition, there exists  $N_1\geq 1$ such that for every word $w \in \mathcal{L}_{N_1}(Z)$ we have $\ell(I_{w}) < \delta$. Fix $w\in\mathcal{L}_{N_1}(Z)$ such that $x^1_m\in I_{w}$. Since $\C_0$ is mixing, there exists $N_2\geq 1$ such that for all $j\geq N_2$ there is a word  $v' \in \mathcal{L}_{j}(\mathcal{C}_0)$ such that $w v' v^1 \in \mathcal{L}(\mathcal{C}_0)$. Fix  $j\geq N_2$ and $v'\in\mathcal{L}_{j}(\C_0)$ with $wv'v^1\in\mathcal{L}(\C_0)$.
		
		By repeating the same arguments as in the proof of Theorem~\ref{Thm:Transitivity} Part~(2), we construct a finite trajectory $z\in X_{m+N+j+n}$ such that $|z_k-x_k|<\eps$ for all $k=0,\ldots,m$ and $|z_{m+N+j+k}-y_k|<\eps$ for all $k=0,\ldots,n$. Since we may construct such an orbit for all $j\geq N$, we conclude by Lemma~\ref{Lemma:MixingCharacterization} that $X$ is topologically mixing.
	\end{Pfof1-2-2}

	\section{Specification}\label{Sect:Spec}
	
	We also establish sufficient conditions for $X$ to have the specification property. Recall that specification implies topological mixing, and for SFTs these properties are equivalent. Thus, it is clear that if $X$ has specification, then $\Sigma_M$ satisfies the Mixing Condition. Our final main result gives a sufficient condition for $X$ to have specification. Before stating this result, let us define a notion of uniform equicontinuity for families of functions defined on subintervals of $[0,1]$. Suppose $\{g_{\omega} : \omega \in \Omega\}$ is a family of functions indexed by $\Omega$, where each $g_{\omega}$ is defined on a compact (possibly degenerate) interval $I_{\omega} \subset [0,1]$. We say that the family is uniformly equicontinuous if $\forall \epsilon >0$, there exists $\delta >0$ such that for all $\omega \in \Omega$, if $x,y \in I_{\omega}$ and $|x-y| < \delta$, then $|g_{\omega}(x) - g_{\omega}(y)|<\epsilon$.
	
	\begin{thm} \label{Thm:Specification}
		Let $F$ be a properly parametrized Markov multi-map with forward trajectory space $X$ and associated SFT $\Sigma_M$. If $\Sigma_M$ satisfies the Coding Condition (CC) and the Mixing Condition (MC), and if the family $\{g_u\colon u\in\mathcal{L}(\C_0)\}$ is uniformly equicontinuous, then $X$ has the specification property.
	\end{thm}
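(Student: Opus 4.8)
The plan is to iterate the gluing construction from the proofs of Theorems~\ref{Thm:Transitivity} and~\ref{Thm:Mixing}, now splicing together $l$ orbit segments rather than two, and then to close the resulting finite trajectory into a loop of the prescribed period $p$ by a fixed-point argument, as in the density-of-periodic-points portion of the proof of Theorem~\ref{Thm:Transitivity} Part~(2). Throughout, $\C_0$ denotes the mixing component with $\A^{(e)}\subset\C_0$ supplied by the Mixing Condition, and $Z\subset\Sigma_M(\C_0)$ is the subshift from the Coding Condition, the inclusion $Z\subset\Sigma_M(\C_0)$ being justified exactly as in the proof of Theorem~\ref{Thm:Mixing} Part~(2).

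First I would fix $\eps>0$ and reduce the metric condition to a finite-coordinate one: choose $M\in\N$ and $\eps'>0$ so that whenever two points of $X$ agree to within $\eps'$ on their first $M$ coordinates they lie within $d$-distance $\eps$. It then suffices to build a periodic $z$ approximating each $x_k$ to within $\eps'$ on the \emph{extended window} $[a_k,b_k+M-1]$, for then $d(T^iz,T^ix_k)<\eps$ for all $i\in[a_k,b_k]$. Next I would fix the three uniform constants on which everything rests. The crucial one is the reset tolerance: uniform equicontinuity of $\{g_u\colon u\in\mathcal{L}(\C_0)\}$ yields a single $\delta>0$, \emph{independent of word length}, such that for every $u\in\mathcal{L}(\C_0)$ and all $s,t$ in the domain of $g_u$ with $|s-t|<\delta$ one has $|g_u(s)-g_u(t)|<\eps'/2$; since each intermediate coordinate of a pulled-back trajectory is $g_{u_j\cdots u_n}(\cdot)$ with $u_j\cdots u_n\in\mathcal{L}(\C_0)$, this one $\delta$ controls all coordinates of a pull-back simultaneously, however long the word. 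The other two constants are $N_1$ from the Coding Condition (so that $\ell(I_w)<\delta$ for all $w\in\mathcal{L}_{N_1}(Z)$) and the mixing constant $N_2$ of $\C_0$; I would then set the specification gap to be $N=N_1+N_2+M$.

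The construction then prescribes a single cyclic word $W$ of length exactly $p$ in $\C_0$ and realizes it by inverse maps. For each $k$, Lemma~\ref{Lemma:SpecialLabeledTrajectories} (together with $\A^{(e)}\subset\C_0$, so its essential labels lie in $\mathcal{L}(\C_0)$) yields a special labeled trajectory $(x^k,u^k)$ with $u^k\in\mathcal{L}(\C_0)$ agreeing with $x_k$ to within $\eps'/2$ on the extended window, and I place $u^k$ at the positions of that window. In each gap I place first a coding word $w^k\in\mathcal{L}_{N_1}(Z)$, chosen by applying property~\ref{CC1} of the Coding Condition to the right endpoint of $x^k$ so that this endpoint lies in $I_{w^k}$, and then a bridge word from the Mixing Condition; since $\C_0$ is mixing, bridges of every length $\geq N_2$ are available, which is exactly the freedom needed to make each gap attain its prescribed length and to join the last window back to the first so that $W$ closes with total length $p$. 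Because $W$ is a genuine cyclic path in $\C_0$, we have $M(W_{p-1},W_0)=1$, hence $D(W_0)\subseteq R(W_{p-1})$, so the loop map $g_W\colon R(W_{p-1})\to D(W_0)\subseteq R(W_{p-1})$ is a continuous self-map of a compact interval and has a fixed point; taking it as $z_0=z_p$ and pulling back around the loop yields $z\in X$ with $T^p(z)=z$. (The fixed point determines only values in the wrap-around gap, which are unconstrained, so it does not interfere with any window.)

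Finally I would check the estimates. For each $k$ the coding word $w^k$ forces both $z_{b_k+M-1}$ and the right endpoint of $x^k$ into $I_{w^k}$, an interval of length $<\delta$; uniform equicontinuity propagates this across the window to give $|z_i-x^k_{i-a_k}|<\eps'/2$, and the triangle inequality with the $\eps'/2$ approximation of $x_k$ by $x^k$ yields $|z_i-(x_k)_i|<\eps'$ on $[a_k,b_k+M-1]$, as required by the metric reduction. I expect the main obstacle to be precisely the point that forces uniform equicontinuity into the hypothesis: securing one reset tolerance $\delta$, and hence one gap length $N$, valid uniformly across windows of \emph{unbounded} length and across arbitrarily many segments. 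In Theorems~\ref{Thm:Transitivity} and~\ref{Thm:Mixing} the tolerance was taken from the ordinary continuity of the two fixed-length maps $g_{u^1},g_{v^1}$, harmless for a bounded number of bounded windows, but that route collapses here; uniform equicontinuity is what restores the needed uniformity. The remaining burden is the index bookkeeping ensuring the gaps and the wrap-around sum to exactly $p$ while every SFT transition (window-to-coding-word, coding-word-to-bridge, bridge-to-window) stays valid, and this parallels the verification in the proof of Theorem~\ref{Thm:Transitivity} Part~(2).
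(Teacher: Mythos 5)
Your proposal is correct and follows essentially the same route as the paper's proof: approximate each segment by a special labeled trajectory (Lemma~\ref{Lemma:SpecialLabeledTrajectories}), insert a length-$N_1$ coding word from $Z$ to pinch the constructed trajectory into an interval $I_w$ of length less than $\delta$, bridge the gaps with mixing words in $\C_0$, close the cyclic word via a fixed point of the composite inverse map, and propagate the $\delta$-closeness backward through each window using the length-independent modulus supplied by uniform equicontinuity. The only cosmetic difference is that your inline metric reduction with extended windows is packaged in the paper as a separate lemma (Lemma~\ref{Lemma:SpecCharacterization}); the substance is identical.
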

	
	For some context, a traditional Markov map that is piecewise linear satisfies the equicontinuity condition of this theorem.
	
	Before we prove this theorem, we give a lemma characterizing specification for $X$ along the same lines as Lemma~\ref{Lemma:TransCharacterization} and Lemma~\ref{Lemma:MixingCharacterization}. Since this result is more technical, we provide a proof.
	
	We have not had need to discuss a metric on $X$, but it is convenient for the proof of this lemma. Given $x,y\in X$, define 
	\[
	d(x,y)=\sup_{k\geq0}\frac{|x_k-y_k|}{k+1}.
	\]
	Then $d$ is a metric on $X$ and the topology generated from this metric is the same as the topology $X$ inherits as a subspace of $[0,1]^{\Z_{\geq0}}$.
	
	\begin{lemma}\label{Lemma:SpecCharacterization}
		Suppose that for all $\eps>0$, there exists $N\geq 1$ such that: for all $n_1,\ldots,n_r\geq 0$, $x^1\in X_{n_1+1},\ldots,x^r\in X_{n_r+1}$ and $j_1,\ldots,j_r\geq N$, there exists $z\in X_{J+1}$ where $J=\sum_{k=1}^r(n_k+j_k)$ such that $z_J=z_0$ and for all $k=1,\ldots,r$, if we set $t_k=\sum_{i=1}^k(n_i+j_i)$, then $|x^k_i-z_{t_k+i}|<\eps$ for all $i=0,\ldots,n_k$. Then $X$ has specification
	\end{lemma}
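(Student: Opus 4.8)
The plan is to verify the specification property directly from the definition in Section~\ref{Sect:Baground}, using the hypothesis as a purely finite, combinatorial black box and exploiting the special form of the metric $d$ to bridge between them. Fix $\eps>0$. First I would use the weighting in $d(x,y)=\sup_{k\geq0}|x_k-y_k|/(k+1)$ to reduce the (infinite, metric) tracing condition to a (finite) coordinate-wise condition. Choose an integer $L$ with $1/(L+1)<\eps$. For any $w,w'\in X$ and any $i$ one has $d(T^iw,T^iw')=\sup_{j\geq0}|w_{i+j}-w'_{i+j}|/(j+1)$; the terms with $j\geq L$ are automatically bounded by $1/(L+1)<\eps$ since all coordinates lie in $[0,1]$, while each term with $0\leq j\leq L-1$ is at most $|w_{i+j}-w'_{i+j}|$. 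Consequently, if $|w_m-w'_m|<\eps$ for every $m$ in the extended window $[\alpha,\beta+L-1]$, then $d(T^iw,T^iw')<\eps$ for all $i\in[\alpha,\beta]$. Thus $\eps$-tracing an orbit of $x_k$ on the time interval $[a_k,b_k]$ in the metric $d$ reduces to matching finitely many coordinates of a candidate point with those of $x_k$ on $[a_k,b_k+L-1]$.

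Next I would translate a given instance of the specification definition into the data required by the hypothesis. Let $N_0=N_0(\eps)$ be the constant supplied by the hypothesis, and set $N:=N_0+L$, the constant we produce for specification. Given $x_1,\dots,x_l\in X$, times $a_1\leq b_1<\cdots<a_l\leq b_l$ with $a_{k+1}-b_k\geq N$, and a period $p\geq b_l+N$, define finite trajectories $x^k\in X_{n_k+1}$ by restricting $x_k$ to coordinates $[a_k,b_k+L-1]$, so $n_k=b_k-a_k+L-1$; these are genuine finite trajectories because each $x_k\in X$. I would then choose the gaps $j_k$ so that, placing the blocks consecutively with these gaps, the total length $J=\sum_{k=1}^{l}(n_k+j_k)$ equals exactly $p$ and the $k$-th block occupies, up to one global cyclic rotation, the window $[a_k,b_k+L-1]$: the interior gaps are determined by the spacings $a_{k+1}-b_k$ and the final gap by $p-b_l$. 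Since $a_{k+1}-b_k\geq N=N_0+L$ and $p-b_l\geq N$, every $j_k\geq N_0$, as the hypothesis demands, and the choice $N\geq L$ forces $b_k+L-1<a_{k+1}$, so the extended windows are pairwise disjoint within one period and impose no conflicting constraints. Applying the hypothesis yields $z\in X_{p+1}$ with $z_0=z_p$ and the stated coordinate bounds; because $z_p=z_0$, the periodic extension $\widehat z\in X$ given by $\widehat z_m=z_{m\bmod p}$ is a well-defined forward trajectory (the trajectory condition holds across the seam precisely because $z$ already records the closing pair $(z_{p-1},z_0)$) and satisfies $T^p\widehat z=\widehat z$.

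Finally, I would apply the suitable cyclic rotation $T^{s}\widehat z$ (with $s=p-a_1$) so that each block sits at the absolute times $[a_k,b_k+L-1]$; then for every $k$ and every $m\in[a_k,b_k+L-1]$ the corresponding coordinate agrees with $(x_k)_m$ to within $\eps$, and by the reduction of the first paragraph this gives $d(T^i\widehat z,T^ix_k)<\eps$ for all $a_k\leq i\leq b_k$. Since the point is periodic of period $p$ and $N$ depends only on $\eps$, this exhibits the specification property. The main obstacle is the bookkeeping in the second paragraph: one must select the gaps $j_k$ and the global rotation so that the blocks land at exactly the prescribed times while the total length is exactly $p$ and simultaneously every gap meets the hypothesis's threshold $N_0$. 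This requires carefully reconciling the absolute-time layout of the specification definition with the cumulative-offset layout of the hypothesis and tracking the extension parameter $L$ throughout; the metric reduction, though it is the conceptual heart of the argument, is by comparison technically routine.
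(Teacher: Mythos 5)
Your proposal is correct and takes essentially the same route as the paper's proof: you use the $1/(k+1)$ weights in $d$ to reduce $\eps$-shadowing on $[a_k,b_k]$ to coordinate-wise $\eps$-closeness on a window extended by a length $L$ with $1/(L+1)<\eps$, take the specification constant to be the hypothesis constant plus $L$, feed the extended restrictions of the $x_k$ and suitably chosen gaps into the hypothesis, and periodically extend the resulting finite trajectory to a $p$-periodic point of $X$. The only differences are bookkeeping conventions (your explicit global cyclic rotation $T^s\widehat z$ versus the paper's attempt to place the blocks at the absolute times $a_k$ directly, and your extension length $L-1$ versus the paper's $N_2$), which do not change the substance of the argument.
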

	
	\begin{proof}
		Let $\eps>0$, and choose $N_1\geq 1$ according to the supposition of this lemma. Let $N_2$ be a positive integer with $(N_1+1)^{-1}<\eps$, and define $M=N_1+N_2$. Let $x^1,x^2,\ldots,x^r\in X$, let 
		\[
		a_1\leq b_1<a_2\leq b_2<\cdots<a_r\leq b_r
		\]
		be positive integers with $a_{k+1}-b_k\geq M$ for all $k=1,\ldots,r$, and let $J\geq b_r+M$ be an integer. For each $k=1,\ldots,r$, define $n_k=b_k-a_k+N_2$. Additionally, for $k=1,\ldots,r-1$, define $j_k=a_{k+1}-b_k+N_2$, and define $j_r=P-b_r+N_2$. Then each $n_k\geq 0$ and $j_r\geq N_1$. Finally, for each $k=0,\ldots r$, let $y^k=(x^k_{a_k},\ldots,x^k_{a_k+n_r})$.
		
		Observe that $\sum_{k=1}^r(n_k+j_k)=J$ and $\sum_{i=1}^k(n_i+j_i)=a_k$. By the supposition of this lemma, there exists $z\in X_{J+1}$ such that $z_0=z_J$, and for all $k=0,\ldots,r$ and $i=0,\ldots,n_k$, we have $|y^k_i-z_{a_k+i}|<\eps$. Since $z_0=z_J$, we may extend $z$ to an infinite trajectory
		\[
		z^*=(z_0,\ldots,z_{J-1},z_0,\ldots,z_{J-1},\ldots).
		\]
		Clearly $z^*$ is periodic with period $J$. Moreover, for each $k=0,\ldots,r$ and $a_k\leq i\leq b_k$, we have
		\[
		d\left(\sigma_X^i(x^k),\sigma_X^i(z^*)\right)=\sup_{j\geq 0}\frac{\left|x^k_{j+i}-z^*_{j+i}\right|}{j+1}.
		\]
		
		From the definition of $N_2$ and the fact that all terms in the trajectory are bounded by 1, we  have that 
		\[
		\sup_{j> N_2}\frac{\left|x^k_{i+j}-z^*_{i+j}\right|}{j+1}<\eps.
		\]
		Additionally,
		\[
		\left|x^k_{i+j}-z^*_{i+j}\right|=\left|y^k_{i+j-a_k}-z^*_{i+j}\right|<\eps,
		\]
		so we have that $d(\sigma_X^i(x^k),\sigma_X^i(z^*))<\eps$, and $X$ has the specification property.

	\end{proof}
	
	\begin{Pfof1-3}
		Suppose $\Sigma_M$ satisfies the Mixing Condition and the Coding Condition. Let $\C_0$ be the mixing component with $\A^{(e)}\subset\C_0$, and let $Z\subset\Sigma_M$ be the subshift that witnesses the Coding Condition. As in the proof of Theorem \ref{Thm:Transitivity}, we have that $Z \subset \Sigma_M(\C_0)$. Suppose additionally that the family of inverse functions $\{g_w\colon w\in\mathcal{L}(C_0)\}$ is uniformly equicontinuous. Let $\eps>0$, and choose $\delta\in(0,\eps)$ to satisfy the $\eps/2$-challenge to the uniform equicontinuity of $\{g_w\colon w\in\mathcal{L}(\C_0)\}$. From the Coding Condition (CC), there exists $N_1\geq 1$ such that for every word $w\in\mathcal{L}_{N_1}(Z)$, we have $\ell(I_w)<\delta$. Since $\C_0$ is a finite set and is a mixing component, there exists $N_2\geq 1$ such that for all $a,b\in\C_0$ and all $j\geq N_2$, there exists a word $u\in\mathcal{L}_j(\C_0)$ with $aub\in\mathcal{L}_{j+2}(\C_0)$. 
		
		Let $n_1,\ldots,n_r\geq 0$, let $x^1\in X_{n^1+1},\ldots,x^r\in X_{n^r+1}$, and let $j_1,\ldots,j_r\geq N_2$.  By Lemma~\ref{Lemma:SpecialLabeledTrajectories}, there exist $(y^1,u^1)\in\mathcal{S}_{n^1+1},\ldots,(y^r,u^r)\in\mathcal{S}_{n^r+1}$ such that for all $k=1,\ldots,r$ and $i=0,\ldots,n_k-1$, we have $|y^k_i-x^k_i|<\eps/2$.
		
		Choose words $w^1,\ldots,w^r\in\mathcal{L}_{N_1}(Z)$ such that for each $k=1,\ldots,r$, $x^k_{n^k}\in I_{w^k}$. Then by the Mixing Condition (MC), for each $k=1,\ldots,r-1$, we may choose $v^k\in\mathcal{L}_{j_k}(\C_0)$ such that $w^kv^ku^{k+1}\in\mathcal{L}(\C_0)$. Additionally, we may choose $v^r\in\mathcal{L}_{j_k}(\C_0)$ such that $w^rv^ru^1\in\mathcal{L}(\C_0)$. Then the function $g_{u^1w^1v^1u^2\cdots w^rv^r}$ is continuous, with domain $R(v^r_{j_r-1})\supset D(u^1_0)$ and codomain  $D(u^1_0)$. Thus this function has a fixed point $z'\in D(u^1_0)$. 
		
		Let $J=rN_1+\sum_{k=1}^r(n_k+j_k)$, and note that this is the length of the word $u^1w^1v^1\cdots u^rw^rv^r$. Define $z_J=z'$. Then for each $k=1,\ldots,j_r$, define $z_{J-k}=g_{v^{r}_{j_r-k}}\circ\cdots\circ g_{v^{r}_{j_r-1}}(z_J)$. For $k=1,\ldots,N_1$, define $z_{J-j_r-k}=g_{w^r_{N_1-k}}\circ\cdots\circ g_{w^r_{N_1-1}}(z_{J-j_r})$. We continue on in this manner, using the inverse functions associated to each symbol in our word of length $J$ to construct a finite trajectory $z\in X_{J+1}$.
		
		Moreover, since $z_J=z'$, the fixed point, we have that $z_J=z_{0}$, so $z$ is a periodic trajectory. Next set $t_1=n_1$, and for $k=2,\ldots,r$, set $t_k=(k-1)N_1+n_1+\sum_{i=1}^{k-1}(j_i+n_{i+1})$. Then we have for all $k=1,\ldots,r$, $y^k_{n_k},z_{t_k}\in I_{w^k}$, so $|y^k_{n_k}-z_{t_k}|<\delta$. Then from equicontinuity, we get $|y^k_{n_k-i}-z_{t_k-i}|<\eps/2$ for all $i=0,\ldots,n_k$. Therefore, for all $k=1,\ldots,r$ and $i=0,\ldots,n_k$, we have $|x^k_{n_k-i}-z_{t_k-i}|<\eps$, so by Lemma~\ref{Lemma:SpecCharacterization}, $X$ has specification.
	\end{Pfof1-3}

	\section{Additional results}\label{Sect:Additional}
	
	We begin this section with some results that establish sufficient conditions for the Coding Condition (CC), as well as the uniform equicontinuity condition of Theorem~\ref{Thm:Specification}. We begin with following sufficient condition for uniform equicontinuity. We assume throughout this section that $F$ is a properly parametrized Markov multi-map with all associated objects defined as in Sections \ref{Sect:Intro} - \ref{Sect:MarkovDefinitions}.
	
	\begin{prop}\label{Prop:equicontinuitycondition}
		Suppose that for every $a\in\A_0$, the graph $G(a)$ is a straight line segment. Then the family of inverse functions $\{g_u\colon u\in\mathcal{L}\}$ is uniformly equicontinuous.
	\end{prop}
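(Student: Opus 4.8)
The plan is to show that every $g_u$ is Lipschitz with a constant independent of $u$; uniform equicontinuity then follows at once by taking $\delta = \rho\eps$ for a suitable fixed $\rho > 0$. First I would dispose of the degenerate words. If some symbol $u_k$ of $u = u_0\cdots u_n$ lies in $\A_1 \cup \A_2$, then $g_{u_k}$ is constant by construction, and since $g_u = (g_{u_0}\circ\cdots\circ g_{u_k})\circ(g_{u_{k+1}}\circ\cdots\circ g_{u_n})$, the outer composition of a constant map forces $g_u$ to be constant, hence $0$-Lipschitz. It therefore suffices to bound the Lipschitz constants of those $g_u$ all of whose symbols lie in $\A_0$.

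For such a word, the hypothesis that each $G(a)$ is a straight line segment means that for $a\in\A_0$ the homeomorphism $f_a\colon D(a)\to R(a)$ is affine, so $g_a = f_a^{-1}$ is affine with constant slope $\ell(D(a))/\ell(R(a))$. By the chain rule for affine maps, $g_u = g_{u_0}\circ\cdots\circ g_{u_n}$ is affine on $R(u_n)$ with slope
\[
|g_u'| = \prod_{k=0}^n \frac{\ell(D(u_k))}{\ell(R(u_k))}.
\]

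The key step is to bound this product via the Markov property. I would regroup the factors as
\[
|g_u'| = \frac{\ell(D(u_0))}{\ell(R(u_n))}\cdot\prod_{k=1}^{n}\frac{\ell(D(u_k))}{\ell(R(u_{k-1}))}.
\]
Since $u_{k-1}u_k\in\mathcal{L}$, the definition of $M$ gives $D_0(u_k)\subset R_0(u_{k-1})$, hence $D(u_k)\subseteq R(u_{k-1})$ and $\ell(D(u_k))\leq\ell(R(u_{k-1}))$; thus every factor in the product is at most $1$. Because $\ell(D(u_0))\leq 1$ and, for $u_n\in\A_0$, the interval $R(u_n)=[u,v]$ has distinct partition endpoints so that $\ell(R(u_n))\geq \rho := \min_{0\le i<r}(p_{i+1}-p_i) > 0$, I conclude $|g_u'|\leq 1/\rho$.

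Consequently every $g_u$ is $(1/\rho)$-Lipschitz, and given $\eps>0$ the choice $\delta=\rho\eps$ witnesses uniform equicontinuity. The one point requiring care is the regrouping together with the telescoping inequality: one must use the Markov consistency $D(u_k)\subseteq R(u_{k-1})$ for \emph{consecutive} symbols to cancel the range-lengths against the domain-lengths, leaving only the boundary terms $\ell(D(u_0))$ and $\ell(R(u_n))$. The finiteness of the partition $P$ is exactly what keeps $\rho$ strictly positive, and hence makes the Lipschitz bound uniform over all words $u\in\mathcal{L}$.
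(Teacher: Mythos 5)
Your proof is correct and takes essentially the same route as the paper's: both compute the slope of the affine map $g_u$ as the product $\prod_{k}\ell(D(u_k))/\ell(R(u_k))$, telescope it using the Markov inclusion $D(u_{k+1})\subset R(u_k)$ to obtain the bound $\ell(D(u_0))/\ell(R(u_n))$, and then use finiteness to get a uniform Lipschitz constant and hence uniform equicontinuity. The only (cosmetic) differences are that you explicitly dispose of words containing symbols of $\A_1\cup\A_2$, where $g_u$ is constant, and bound $\ell(R(u_n))$ below by the minimal partition gap $\rho$, whereas the paper implicitly restricts to $\A_0$ and takes the maximum of $\ell(D(a))/\ell(R(b))$ over $a,b\in\A_0$.
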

	\begin{proof}
		By assumption, for every $a\in\A_0$, we have that $f_a$ is a linear function with slope $\ell(R(a))/\ell(D(a))$, and therefore $g_a$ has slope $\ell(D(a))/\ell(R(a))$. More generally, if $u_0\cdots u_n\in\mathcal{L}$, then the slope of $g_u$ is
		\[
		\frac{\ell(D(u_n))}{\ell(R(u_n))}\frac{\ell(D(u_{n-1}))}{\ell(R(u_{n-1}))}\cdots\frac{\ell(D(u_0))}{\ell(R(u_0))}.
		\]
		
		Additionally, for each $0\leq i\leq n-1$, $D(u_{i+1})\subset R(u_i)$, so we get that the slope of $g_u$ is less than or equal to $\ell(D(u_0))/R(u_n)$. Thus if we let
		\[
		M=\max\left\{\frac{D(a)}{R(b)}\colon a,b\in\A_0\right\},
		\]
		then given $\eps>0$ we have that $\delta=\eps/M$ satisfies the $\eps$-challenge to uniform equicontinuity.
	\end{proof}

	We now consider sufficient conditions for the Coding Condition (CC) to be satisfied. Recall that for $a\in\A_0$, we have $R(a)=[p,q]$, where $p$ and $q$ are elements of the partition $P$ but are not necessarily adjacent in $P$. If there exists $p'\in P\cap R_0(a)$, then for any $b\in\A$ such that $ab\in\mathcal{L}$, we have that $I_{ab}$ is a proper subset of $I_a$. On the other hand, if $a\in\A_0$ and $R_0(a)\cap P=\emptyset$, then for any $b\in\A_0$ with $ab\in\mathcal{L}$, we have $I_{ab}=I_a$.
	
	\begin{prop}\label{Prop:codingcondition}
		Let $\mathcal{D}=\{a\in\A_0\colon R_0(a)\cap P\neq\emptyset\}$. Suppose $\mathcal{D}$ is non-empty and that
		\[
		\sup
		\left\{
		\frac{\ell\left(I_{u_0\cdots u_n}\right)}{\ell\left(I_{u_0\cdots u_{n-1}}\right)}\colon u\in \mathcal{L},u_{n-1}\in \mathcal{D},\ell(I_{u_0\cdots u_{n-1}})\neq 0
		\right\}<1.
		\]
		If there exists an irreducible component $\C_0\subset \A$ containing $\A_0$, then $\Sigma_M$ satisfies the Coding Condition (CC).
	\end{prop}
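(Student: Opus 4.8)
The plan is to exhibit an explicit subshift $Z \subset \Sigma_M$ witnessing the Coding Condition, and the key point is that $Z$ cannot be taken to be all of $\Sigma_M(\C_0)$. Indeed, a symbol $a \in \A_0 \setminus \mathcal{D}$ with $D(a) = R(a)$ is a self-loop along which $I_{a^k} = D(a)$ is constant, so $\max_{u \in \mathcal{L}_n(\C_0)} \ell(I_u) \not\to 0$; thus Property~(2) of Definition~\ref{Def:CC} fails for $\Sigma_M(\C_0)$. Instead I would fix an integer $G$ (the number of partition subintervals) and set $Z = \{ \mathbf{a} \in \Sigma_M(\C_0) : \text{every block } a_i \cdots a_{i+G-1} \text{ contains a symbol of } \mathcal{D} \}$. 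This is an SFT, hence closed and shift-invariant, and transitivity on symbols for $Z$ follows from irreducibility of $\C_0$ (route any connecting word through $\mathcal{D}$ so as to respect the block condition). Granting that $Z$ codes every point, Property~(2) is then a telescoping estimate: writing $\lambda < 1$ for the supremum in the hypothesis, the nesting \eqref{Iu} gives $\ell(I_{u_0\cdots u_{k+1}}) \le \ell(I_{u_0 \cdots u_k})$ at every step and $\le \lambda\, \ell(I_{u_0 \cdots u_k})$ whenever $u_k \in \mathcal{D}$, so any $u \in \mathcal{L}_n(Z)$ satisfies $\ell(I_u) \le \lambda^{\lfloor n/G\rfloor - 1}$, which tends to $0$ uniformly.

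The heart of the argument is a structural lemma bounding the gaps between successive visits to $\mathcal{D}$, which is also what makes Property~(1) work. First I would observe that a non-$\mathcal{D}$ symbol $c \in \A_0$ has $R(c) = [p_s,p_{s+1}]$ with $R_0(c) \cap P = \varnothing$; hence if $cc' \in \mathcal{L}$ then $D_0(c') \subset (p_s,p_{s+1})$ forces $c' \in \A_0$ with $D(c') = R(c)$ (a symbol of $\A_1 \cup \A_2$ is excluded, since its domain is a partition point, which cannot lie in the partition-free open interval $R_0(c)$). Consequently the only moves out of a non-$\mathcal{D}$ symbol are to $\A_0$-symbols sitting on the interval $R(c)$, and one gets a well-defined walk on the finite set of partition subintervals: from a subinterval $J$ one either finds a $\mathcal{D}$-symbol with domain $J$, or passes via some non-$\mathcal{D}$ symbol to the subinterval $R(c)$. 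I claim this walk must reach a $\mathcal{D}$-domain within $G$ steps. If not, the set $\mathcal{R}$ of subintervals reachable from some $J_0$ would consist entirely of non-$\mathcal{D}$-domains and be closed under the walk; but then any $\A_0$-symbol with domain in $\mathcal{R}$ could never reach $\mathcal{D}$ (neither through $\A_0$, by closure, nor through $\A_1 \cup \A_2$, by the transition restriction above), contradicting irreducibility of $\C_0 \supseteq \A_0$.

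With this gap bound in hand, Property~(1) is a greedy construction. Given $y \in [0,1]$, choose $a_0 \in \A_0$ with $y \in D(a_0)$ (possible since the $\A_0$-domains cover $[0,1]$) and build the trajectory one symbol at a time: at the current point $x_k \in J$, if $J$ is a $\mathcal{D}$-domain apply the corresponding $\mathcal{D}$-symbol, otherwise apply the non-$\mathcal{D}$ symbol dictated by the walk toward $\mathcal{D}$. Crucially, each non-$\mathcal{D}$ step realizes exactly at the level of points, because $R(c) = D(c')$ guarantees $x_{k+1} = f_c(x_k) \in D(c')$; so the symbolic walk lifts to an honest forward trajectory with $x_0 = y$, and by the structural lemma it meets $\mathcal{D}$ at least once every $G$ steps. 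Hence its coding lies in $Z$, giving Property~(1).

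I expect the structural lemma of the second paragraph to be the main obstacle: it is the one place where the hypotheses genuinely interact --- the transition restriction coming from the definition of $\mathcal{D}$ (non-$\mathcal{D}$ ranges avoid interior partition points), and the reachability of $\mathcal{D}$ coming from irreducibility of $\C_0 \supseteq \A_0$ together with $\mathcal{D} \neq \varnothing$ --- while the quantitative contraction hypothesis enters only afterward, in the routine telescoping for Property~(2).
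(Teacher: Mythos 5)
Your proposal is correct and is essentially the paper's own proof: the paper likewise takes $Z$ to be the set of sequences in $\Sigma_M(\C_0)$ in which every window of a fixed length meets $\mathcal{D}$ (there the window is $N+1$, with $N$ supplied by irreducibility of $\C_0$, rather than your $G$), proves Property~(2) of Definition~\ref{Def:CC} by the same telescoping of the contraction factor at $\mathcal{D}$-visits, and proves Property~(1) by the same greedy coding, resting on exactly your key observation that a symbol of $\A_0\setminus\mathcal{D}$ can only be followed by $\A_0$-symbols whose domain equals its range (so the interval $I_{a_0\cdots a_k}$ does not shrink, equivalently the point moves deterministically, along $\mathcal{D}$-free stretches). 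The remaining differences are cosmetic: the paper obtains the bounded-gap-to-$\mathcal{D}$ lemma directly from irreducibility --- a word of length at most $N$ from any symbol to a fixed element of $\mathcal{D}$, truncated at its first $\mathcal{D}$-symbol --- instead of your reachability walk on the subintervals, it tracks the nested pullback intervals $I_{a_0\cdots a_k}\ni y$ instead of pushing the point $x_k$ forward (equivalent bookkeeping), and both write-ups assert transitivity on symbols of $Z$ from irreducibility with the same brevity.
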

	
	\begin{proof}
		Since $\C_0$ is an irreducible component and finite, there exists $N\geq 1$ such that for all $a,b\in\C_0$, there exists $u\in\mathcal{L}_N(\C_0)$ beginning with $a$ and ending with $b$. Observe that if $a\in \A_0\setminus\mathcal{D}$, then $ab\in\mathcal{L}$ implies $b\in\mathcal{A}_0$. These facts allow us to define the set
		\[
		Z=\left\{\mathbf{a}\in\Sigma_M(\C_0)\colon\forall j\geq0,a_{j+i}\in \mathcal{D}\text{ for some }0\leq i\leq N\right\}.
		\]
		The irreducibility of $\C_0$ and the choice of $N$ imply that $Z$ is transitive on symbols.
		
		We now show that 
		\[
		\lim_{n\to\infty}\max_{u\in\mathcal{L}_n(Z)}\ell(I_u)=0.
		\]
		Given this goal, we need only focus on those words $u\in\mathcal{L}$ with $\ell(I_u)>0$. Let
		\[
		\gamma=\sup
		\left\{
		\frac{\ell\left(I_{u_0\cdots u_n}\right)}{\ell\left(I_{u_0\cdots u_{n-1}}\right)}\colon u\in \mathcal{L},u_{n-1}\in \mathcal{D},\ell(I_{u_0\cdots u_{n-1}})\neq 0
		\right\}.
		\]
		Observe that $\ell(I_u)\leq 1$ for all $u\in\mathcal{L}$. Thus if $n\geq N+1$ and $u\in\mathcal{L}_n$, then $\ell(I_u)\leq\gamma$. It follows that if $n\geq jN+1$, then $\ell(I_u)\leq\gamma^j$. We have assumed that $\gamma<1$, so this approaches 0 as $j\to\infty$.
		
		Next we must show that for all $y\in[0,1]$, there exists $\mathbf{a}\in Z$ and $x\in X$ such that $(x_i,x_{i+1})\in G(a_i)$ for all $i\geq 0$ and $x_0=y$. Let $y\in[0,1]$. From Definition~\ref{Defn:MarkovMultiMap}, there exists $a_0\in\A_0$ with $y\in D(a_0)$. We now build our desired sequence in $Z$ inductively. If $a_0\in\mathcal{D}$, then choose any $a_1\in\A_0$ with $y\in I_{a_0a_1}$. If $a_0\notin\mathcal{D}$, then choose a word $a_0\cdots a_{j}\in\mathcal{L}(\C_0)$ with $j\leq N-1$, $a_{j}\in\mathcal{D}$, and $a_i\in\C_0\setminus\mathcal{D}$ for all $0\leq i\leq j-1$. Now since $a_0\in\A_0\setminus\mathcal{D}$, $R_0(a_0)$ does not contain any partition elements, so $a_0$ cannot be followed by a symbol in either $\A_1$ or $\A_2$. Thus $a_1\in\A_0$. Likewise, if $a_1\in\A_0\setminus\mathcal{D}$, then $a_2\in\A_0$. Continuing on, we get that, in fact, $a_i\in\A_0\setminus\mathcal{D}$ for all $0\leq i\leq j-1$.	It follows that $I_{a_0\cdots a_{j}}=I_{a_0}$ which contains $y$. In either case we have $k\leq N-1$ and a word $a_0\cdots a_k$ with $y\in I_{a_0\cdots a_k}$ and either $a_k\in\mathcal{D}$ or $a_{k-1}\in\mathcal{D}$.
		
		Now suppose that for some $k\geq 1$, $a_0,\ldots,a_k\in\A_0$ have been defined so that $y\in I_{a_0\cdots a_k}$, either $a_k\in\mathcal{D}$ or $a_{k-1}\in\mathcal{D}$, and the word $a_0\cdots a_k$ contains no more than $N-1$ consecutive terms in $\A_0\setminus \mathcal{D}$. If $a_k\in\mathcal{D}$, choose any $a_{k+1}\in\A_0$ with $y\in I_{a_0\cdots a_ka_{k+1}}$. If $a_k\notin\mathcal{D}$, choose a word $a_k\cdots a_{k+j}\in\mathcal{L}$ with $j\leq N-1$, $a_{k+j}\in\mathcal{D}$, and $a_i\in\A_0\setminus\mathcal{D}$ for all $0\leq i\leq j-1$. It follows that
		$
		I_{a_0\cdots a_{k+j}}=I_{a_0\cdots a_k}
		$
		which contains $y$.
		
		In this way we define a sequence $\mathbf{a}\in Z$ such that $y\in I_{a_0\cdots a_j}$ for all $j\geq 0$. Since each $a_i\in\A_0$, the function $f_{a_i}$ is defined. Thus we may let $x_0=y$, and for every $i\geq 0$, $x_{i+1}=f_{a_i}(x_i)$. Then $x\in X$, $(x_i,x_{i+1})\in G(a_i)$ for all $i\geq 0$, and $x_0=y$. Therefore $\Sigma_M$ satisfies the Coding Condition (CC).
	\end{proof}
	
	We have the following  corollary to this proposition that mirrors Proposition~\ref{Prop:equicontinuitycondition}.
	
	\begin{cor}\label{Cor:codingcondition}
		Suppose that for every $a\in\A_0$, the graph $G(a)$ is a straight line segment. If there exists $a\in\A_0$ with $R_0(a)\cap P\neq\emptyset$, and there exists an irreducible component of $\A$ containing $\A_0$, then $\Sigma_M$ satisfies the Coding Condition (CC).
	\end{cor}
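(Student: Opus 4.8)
The plan is to deduce the corollary directly from Proposition~\ref{Prop:codingcondition} by verifying its three hypotheses under the extra assumption that each $G(a)$, $a\in\A_0$, is a straight line segment. Two of the three are immediate: the set $\mathcal{D}=\{a\in\A_0\colon R_0(a)\cap P\neq\emptyset\}$ is nonempty precisely because we assume the existence of some $a\in\A_0$ with $R_0(a)\cap P\neq\emptyset$, and the existence of an irreducible component $\C_0\supseteq\A_0$ is assumed outright. So the entire content of the proof reduces to checking the supremum condition, i.e.\ that
\[
\gamma:=\sup\left\{\frac{\ell(I_{u_0\cdots u_n})}{\ell(I_{u_0\cdots u_{n-1}})}\colon u\in\mathcal{L},\ u_{n-1}\in\mathcal{D},\ \ell(I_{u_0\cdots u_{n-1}})\neq 0\right\}<1.
\]

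First I would exploit linearity exactly as in the proof of Proposition~\ref{Prop:equicontinuitycondition}: since each $G(a)$ with $a\in\A_0$ is a straight line segment, each $g_a$ is affine with $|\text{slope}|=\ell(D(a))/\ell(R(a))$, and hence any composition $g_{u_0\cdots u_{n-1}}$ restricted to the relevant domain is affine, scaling lengths by the product of the individual slopes (the chain of inclusions $I_{u_0\cdots u_j}\subseteq D(u_j)\subseteq R(u_{j-1})$ guarantees the compositions are well-defined). Writing $\lambda=\prod_{i=0}^{n-2}\ell(D(u_i))/\ell(R(u_i))$, I would record that $\ell(I_{u_0\cdots u_{n-1}})=\lambda\,\ell(D(u_{n-1}))$ and $\ell(I_{u_0\cdots u_n})=\lambda\,\frac{\ell(D(u_{n-1}))}{\ell(R(u_{n-1}))}\,\ell(D(u_n))$, so that the ratio telescopes to
\[
\frac{\ell(I_{u_0\cdots u_n})}{\ell(I_{u_0\cdots u_{n-1}})}=\frac{\ell(D(u_n))}{\ell(R(u_{n-1}))}.
\]
Note that $\ell(I_{u_0\cdots u_{n-1}})\neq 0$ forces $u_0,\dots,u_{n-1}\in\A_0$, so all the slopes appearing are well-defined and nonzero.

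Next I would argue the key strict inequality. Because $u_{n-1}\in\mathcal{D}$, the interval $R(u_{n-1})=[u,v]$ contains a partition point in its interior, so $R(u_{n-1})$ is a union of at least two partition cells. Since $u_{n-1}u_n\in\mathcal{L}$, the Markov property gives $D(u_n)\subseteq R(u_{n-1})$, and $D(u_n)$ is either a single partition cell or a singleton; in either case it is a proper subset of $R(u_{n-1})$, whence $\ell(D(u_n))<\ell(R(u_{n-1}))$ and the ratio is strictly below $1$. Finally, since the ratio depends only on the pair $(u_{n-1},u_n)$ and $\A$ is finite, $\gamma$ is a maximum over finitely many values each strictly less than $1$, so $\gamma<1$. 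With all three hypotheses of Proposition~\ref{Prop:codingcondition} in hand, the Coding Condition follows.

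I do not anticipate a serious obstacle here, as the argument is essentially bookkeeping; the only points requiring care are the telescoping length computation and the justification that the composed inverse maps are genuinely affine on the domains that arise. The conceptual heart is simply that an interior partition point inside $R(u_{n-1})$ forces a definite, uniform contraction each time a symbol of $\mathcal{D}$ is read, which (together with the irreducibility bringing $\mathcal{D}$ into play at bounded intervals) drives $\ell(I_u)\to 0$.
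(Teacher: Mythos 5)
Your proposal is correct and takes essentially the same route as the paper: both reduce to verifying the supremum hypothesis of Proposition~\ref{Prop:codingcondition}, using linearity to localize the contraction ratio to the last transition, where the interior partition point in $R(u_{n-1})$ forces the ratio strictly below $1$ and finiteness of $\A$ gives a uniform bound $\gamma<1$. Your telescoping identity $\ell(I_{u_0\cdots u_n})/\ell(I_{u_0\cdots u_{n-1}})=\ell(D(u_n))/\ell(R(u_{n-1}))$ is just a more explicit form of the paper's factor $\gamma(u_{n-1})$.
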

	\begin{proof}
		Once again let $\mathcal{D}=\{a\in\A_0\colon R_0(a)\cap P\neq\emptyset\}$. If $a\in\mathcal{D}$, then there exist $b_1,\ldots,b_k\in\A_0$ such that $ab_i\in\mathcal{L}$ for all $1\leq i\leq k$, and $I_{ab_1},\ldots,I_{ab_k}$ form a partition of $I_a$. Choose $i$ so that $\ell(I_{ab_i})/\ell(I_a)$ is maximized, and let $\gamma(a)$ be equal to this fraction. Observe that $\gamma(a)<1$.
		
		Because of the linearity assumption, given any $u_0\cdots u_n\in\mathcal{L}$ with $u_{n-1}\in\mathcal{D}$, we have $\ell(I_{u_0\cdots u_{n}})\leq \gamma(u_{n-1})\ell(I_{u_0\cdots u_{n-1}})$. Since $\mathcal{D}$ is finite, we may let $\gamma=\max_{a\in\mathcal{D}}\gamma(a)$. It follows that $\gamma<1$ and the conditions of Proposition~\ref{Prop:codingcondition} are satisfied.
	\end{proof}
	
	\subsection{Connections between the forward trajectory and inverse limit systems}
	
	In this section we establish connections between the inverse system and the forward trajectory system for Markov multi-maps, and as corollaries we obtain versions of main results in terms of inverse limit systems. Recall that we denote the forward trajectory system by $X$ with shift map $T$ and the inverse limit system by $Y$ with shift map $S$.
	
	\begin{prop}\label{ForwardTransitiveImpliesInverseTransitive}
		If $(X,T)$ is topologically transitive, then $(Y,S)$ is topologically transitive.
	\end{prop}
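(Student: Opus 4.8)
The plan is to establish a general principle relating the two systems and then apply it. The key observation is that a forward trajectory $x = (x_0, x_1, \dots) \in X$ and an inverse trajectory $y = (y_0, y_{-1}, \dots) \in Y$ both encode the same combinatorial data, namely consecutive pairs lying in $G(F)$; the only difference is the direction in which the index runs. I would first make this correspondence explicit: given any finite trajectory $(w_0, \dots, w_{n-1}) \in X_n$, it serves simultaneously as an initial segment of some point in $X$ (reading left to right) and as a terminal segment of some point in $Y$ (reading the indices in reverse). So both $X$ and $Y$ are governed by the same set $X_n$ of finite trajectories, and the natural basic open sets are cylinders determined by elements of $X_n$ up to $\epsilon$.

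Next I would set up the characterization of topological transitivity for $(Y,S)$ analogous to Lemma~\ref{Lemma:TransCharacterization}. Because $S$ is the right-shift on $Y$, transitivity of $(Y,S)$ should reduce to the statement: for all finite trajectories $x \in X_m$, $y \in X_n$ and all $\epsilon > 0$, there is an $N$ and a finite trajectory $z \in X_{m+N+n}$ approximating $x$ on its first block and $y$ on its last block — but with the roles of ``first seen'' and ``second seen'' swapped relative to the forward case, reflecting that $S$ moves the window backward in time. The crucial point I expect to exploit is that this gluing condition on finite trajectories is symmetric in $x$ and $y$: a single finite trajectory $z \in X_{m+N+n}$ whose initial block is near one prescribed pattern and whose terminal block is near another witnesses transitivity for $X$ if we read one way and for $Y$ if we read the other. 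Thus the hypothesis that $(X,T)$ is topologically transitive supplies, for any given pair of patterns and any $\epsilon$, exactly the kind of connecting finite trajectory that transitivity of $(Y,S)$ requires.

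The proof then proceeds by taking two nonempty open sets in $Y$, shrinking them to basic cylinder sets determined by finite trajectories $y, x \in X_\ast$ within tolerance $\epsilon$, feeding the corresponding patterns (in the appropriate order) into the transitivity of $(X,T)$ via Lemma~\ref{Lemma:TransCharacterization} to obtain a connecting finite trajectory $z$, and then showing this $z$ extends to a point of $Y$ lying in $S^N$ of the first set and meeting the second. Here one must confirm that a finite trajectory realizing the prescribed endpoints actually extends to a bi-infinite-in-the-past element of $Y$; this follows because any point of $X_n$ extends backward, as $[0,1] = \bigcup_{a \in \mathcal{A}} D(a)$ guarantees every point has a preimage under $F$, so inverse trajectories can always be prolonged.

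The main obstacle I anticipate is bookkeeping the index reversal correctly — ensuring that the ``initial segment'' and ``terminal segment'' conditions from the forward-transitivity characterization land on the correct coordinates of the inverse trajectory after applying the right-shift $S^N$, and verifying that the connecting finite trajectory extends to an honest element of $Y$ rather than merely to an element of $X_{m+N+n}$. The dynamical content is entirely captured at the level of finite trajectories, which are direction-agnostic; once the correspondence $X_n \leftrightarrow$ (segments of $Y$) is pinned down and extendability backward is noted, the argument is a translation of Lemma~\ref{Lemma:TransCharacterization} into the inverse setting, so no genuinely new estimate is needed.
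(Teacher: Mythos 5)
Your overall architecture is the same as the paper's: truncate the two points of $Y$ to terminal finite trajectories, feed those patterns into the transitivity of $(X,T)$ in the time-reversed order, and extend the resulting connecting finite trajectory backward to an honest element of $Y$. The index bookkeeping and order swap you describe are exactly what the paper does. However, there is a genuine gap at the very step you flag as needing care: backward extendability. You justify it by asserting that $[0,1]=\bigcup_{a\in\mathcal{A}}D(a)$ ``guarantees every point has a preimage under $F$.'' That is false. Condition (6) of Definition~\ref{Defn:MarkovMultiMap} says the \emph{domains} cover $[0,1]$, which guarantees $F(x)\neq\emptyset$ for every $x$, i.e.\ \emph{forward} extendability. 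A preimage of $y$ exists precisely when $y\in\bigcup_{a\in\mathcal{A}}R(a)$, and the definition places no covering requirement on the \emph{ranges}: a Markov multi-map all of whose ranges lie in $[0,1/2]$ is perfectly legal, and then no point of $(1/2,1]$ has a preimage, so finite trajectories need not extend backward at all. Thus finite trajectories are not ``direction-agnostic'' in the way your closing paragraph needs: forward extension is free, backward extension is not, and this asymmetry is the only real content of the proposition beyond bookkeeping.

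The missing ingredient --- which is exactly what the paper supplies at this point --- is that topological transitivity of $(X,T)$ itself forces $F$ to be surjective. Sketch: the image of $F$ is $\bigcup_{a\in\mathcal{A}}R(a)$, a finite union of closed sets, hence closed; transitivity of $(X,T)$ produces trajectories whose later coordinates come arbitrarily close to any prescribed $y\in[0,1]$ (the cylinder of trajectories starting near $y$ is nonempty and open), so $y$ is a limit of points of the image and therefore lies in it. With surjectivity in hand, one extends the connecting finite trajectory backward one step at a time, and the rest of your argument goes through essentially as in the paper. The paper makes this point explicitly, both in its proof (``The transitivity of $(X,T)$ implies that $F$ is surjective'') and in the remark immediately following the proposition, which stresses that surjectivity is precisely what is needed to prolong finite trajectories into inverse trajectories; your proposal needs that observation, with a correct justification, to be complete.
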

	\begin{proof}
		Let $x,y\in Y$ and $\eps>0$. Choose $n\in \N$ such that $\sup_{k\geq n+1} 1/(k+1)<\eps$. Consider the finite trajectories $(x_{-n+1},x_{-n+2},\ldots,x_{0})$ and $(y_{-n+1},\ldots,y_{0})$. These can be extended to forward trajectories $\tilde{x},\tilde{y}\in X$ where $\tilde{x}_k=x_{k-n+1}$ and $\tilde{y}_k=y_{k-n+1}$ for all $0\leq k\leq n-1$. Since $(X,T)$ is topologically transitive, there exists a trajectory $z\in X$ such that $d_X(\tilde{x},z)<\eps$ and $d(\tilde{y},T^j(z))<\eps$ for some $j\in\N$.
		
		In particular, it follows from the definition of the metric $d_X$ that for all $0\leq k\leq n-1$, $|\tilde{x}_k-z_k|<\eps(k+1)$ and $|\tilde{y}_k-z_{k+l}|<\eps(k+1)$. The transitivity of $(X,T)$ implies that $F$ is surjective. In particular, that implies we may the finite trajectory $(z_0,\ldots,z_{j+n-1})$ and extend it to an inverse trajectory $\tilde{z}\in Y$ where $\tilde{z}_k=z_{k+n-1}$ for all $-(n+j+1)\leq k\leq 0$. Then
		\begin{align*}
			d_Y(x,\tilde{z})&=\sup_{k\geq 0}\frac{|x_{-k}-\tilde{z}_{-k}|}{-k+1}\\
			&=\max\left\{\sup_{0\leq k\leq n}\frac{|x_{-k}-\tilde{z}_{-k}|}{k+1},\sup_{k\geq n+1}\frac{|x_{-k}-\tilde{z}_{-k}|}{k+1}\right\}\\
			&<\max\left\{\sup_{0\leq k\leq n}\frac{\eps(k+1)}{k+1},\sup_{k\geq n+1}\frac{1}{k+1}\right\}\\
			&<\eps.
		\end{align*}
		Likewise $d_Y(y,S^j(\tilde{z}))<\eps$, so $(Y,S)$ is topologically transitive.
	\end{proof}
	
	Every finite trajectory can be extended to an infinite forward trajectory. However surjectivity of the multi-map is necessary in order to extend every finite trajectory to an infinite inverse trajectory. We say a multi-map $F\colon[0,1]\to2^{[0,1]}$ is \emph{surjective} if for every $y\in[0,1]$ there exists $x\in[0,1]$ such that $y\in F(x)$. When we assume that the forward trajectory system is topologically transitive, that implies surjectivity of the multi-map. Transitivity of the inverse limit system does not imply that the multi-map is surjective however. This is because for a point $x\in[0,1]$ to be part of an inverse trajectory, it must have pre-images under all iterations of the multi-map. This is made more explicit in the following remark which also appears in \cite[Remark~2.6]{KellyMeddaugh2015}
	
	\begin{rmk}
		Let $F\colon[0,1]\to2^{[0,1]}$ be a multi-map, and let
		\[
		W=\bigcap_{n=1}^\infty F^n([0,1]).
		\]
		Then $W=F(W)$, and the inverse limit spaces $Y(F)$ and $Y(F|_W)$ are equal.
	\end{rmk}
	
	With this observation, one may establish the following result by making straightforward adaptations to the proof of Proposition~\ref{ForwardTransitiveImpliesInverseTransitive}.
	
	\begin{prop}
		Let $F\colon[0,1]\to2^{[0,1]}$ be a multi-map, and let
		\[
		W=\bigcap_{n=1}^\infty F^n([0,1]).
		\]
		If the inverse limit system is topologically transitive, then the forward trajectory system $X(F|_W)$ is topologically transitive.
	\end{prop}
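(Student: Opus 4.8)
The plan is to run the argument of Proposition~\ref{ForwardTransitiveImpliesInverseTransitive} in reverse, exchanging the roles of the forward and inverse trajectory spaces and invoking the preceding Remark to replace $F$ by $F|_W$. The Remark supplies exactly the analogues of the two ingredients used before. First, $Y(F)=Y(F|_W)$, so the hypothesis gives transitivity of the inverse system for $F|_W$, and moreover every coordinate appearing in an inverse trajectory already lies in $W$. Second, the identity $W=F(W)$ plays the role that surjectivity of $F$ played in the original proof: it guarantees that finite forward trajectories in $W$ can be extended \emph{backward} into full inverse trajectories of $Y(F|_W)$.

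First I would fix $x,y\in X(F|_W)$ and $\eps>0$, choose $n$ with $\sup_{k\geq n}\tfrac{1}{k+1}<\eps$, and truncate to the finite forward trajectories $(x_0,\dots,x_{n-1})$ and $(y_0,\dots,y_{n-1})$ in $W$. Using $W=F(W)$, I extend each block backward to inverse trajectories $\tilde x,\tilde y\in Y(F|_W)=Y(F)$, arranged so that their coordinates at indices $-n+1,\dots,0$ record $x_0,\dots,x_{n-1}$ and $y_0,\dots,y_{n-1}$ respectively. Applying topological transitivity of $(Y,S)$ to small neighborhoods of $\tilde y$ and $\tilde x$ produces $j\geq 0$ and $w\in Y$ with $w$ near $\tilde y$ and $S^{j}(w)$ near $\tilde x$; since $S$ is the right shift, this places a copy of the $y$-block near index $0$ of $w$ and a copy of the $x$-block at indices $-(j+n-1),\dots,-j$. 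Reading the coordinates of $w$ from index $-(j+n-1)$ up to $0$ in increasing order yields a finite forward trajectory in $W$ that approximates $x_0,\dots,x_{n-1}$ at its front and $y_0,\dots,y_{n-1}$ after a gap of length $j$. I would then extend this to an infinite $z\in X(F|_W)$ and verify, using the tail estimate from the choice of $n$, that $d_X(x,z)<\eps$ and $d_X(y,T^{j}z)<\eps$; by the characterization of transitivity in terms of finite trajectories (cf. Lemma~\ref{Lemma:TransCharacterization}) this proves transitivity of $X(F|_W)$.

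The step requiring the most care — the genuinely new ingredient relative to Proposition~\ref{ForwardTransitiveImpliesInverseTransitive} — is the final \emph{forward} extension, since in the original argument backward extension was the step needing surjectivity and forward extension was automatic, whereas here the situation is reversed. I would justify it by first observing that $F(W)\subseteq W$: if $c\in W$ and $d\in F(c)$, then for every $n$ one appends $d$ to a length-$(n-1)$ trajectory ending at $c$ to see $d\in F^{n}([0,1])$, whence $d\in W$. Thus every forward image of a point of $W$ remains in $W$, and together with $F(W)=W$ (and nonemptiness of the values of $F$) this makes $F|_W$ total, so a routine compactness (König-type) argument extends any finite forward trajectory in $W$ to an infinite one. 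The remaining work is bookkeeping: tracking the index shift between the right shift $S$ on $Y$ and the left shift $T$ on $X$ so that the $x$-block precedes the $y$-block, and reconciling the two tail estimates — precisely the ``straightforward adaptations'' promised before the statement.
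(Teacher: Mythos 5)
Your proposal is correct and follows exactly the route the paper intends: the paper gives no separate proof, stating only that the result follows ``by making straightforward adaptations'' to the proof of Proposition~\ref{ForwardTransitiveImpliesInverseTransitive} using the preceding Remark, and you have carried out precisely those adaptations --- using $Y(F)=Y(F|_W)$ and $W=F(W)$ for the backward extension, forward invariance $F(W)\subseteq W$ for the (now trivial, no K\"onig argument needed) forward extension, and the same shift bookkeeping in reverse. Your choice to apply transitivity of $(Y,S)$ to basic cylinder neighborhoods demanding uniform closeness on the blocks, rather than to $d_Y$-balls, is also the right way to handle the weighted-metric estimates, matching (indeed slightly cleaning up) the original argument.
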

	
	Likewise, we may use the same methods to conclude the following results. Note that a version of \eqref{spec-equivalence} appears in \cite[Theorems~4.1\&4.3]{RainesTennant2018}.
	
	\begin{thm}\label{Thm:forwardvsinverse}
		Let $F\colon[0,1]\to2^{[0,1]}$ be a multi-map, and let
		\[
		W=\bigcap_{n=1}^\infty F^n([0,1]).
		\]
		\begin{enumerate}
			\item $X(F|_W)$ is Devaney chaotic if and only if $Y(F)$ is Devaney chaotic.
			\item $X(F|_W)$ is topologically mixing if and only if $Y(F)$ is topologically mixing.
			\item\label{spec-equivalence} $X(F|_W)$ has the specification property if and only if $Y(F)$ has the specification property.
		\end{enumerate}
		Moreover, if $X(F)$ is Devaney chaotic, topologically mixing, or has specification, then $W=[0,1]$, and $X(F|_W)=X(F)$.
	\end{thm}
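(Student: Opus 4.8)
The plan is to dispose of the ``moreover'' clause first, then reduce all three biconditionals to the case of a \emph{surjective} multi-map, and finally transfer each property between the forward and inverse systems by the truncate--reinterpret--extend argument already used for Proposition~\ref{ForwardTransitiveImpliesInverseTransitive}. Throughout I use the standing assumption that $F$ has nonempty values, so that every point of $[0,1]$ begins a forward trajectory.

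For the ``moreover'' clause, observe that each of Devaney chaos, topological mixing, and specification implies topological transitivity of $(X(F),T)$. I would first show that transitivity forces $T$ to be surjective: given any nonempty open $V\subset X(F)$ with $V\neq X(F)$, choose a nonempty open $U$ disjoint from $V$; the open-set formulation of transitivity yields some $n\geq 1$ with $T^n(U)\cap V\neq\emptyset$, and since $T^n(U)\subset T(X(F))$ for $n\geq 1$, the image $T(X(F))$ meets every such $V$ and is therefore dense. As $X(F)$ is compact and $T$ continuous, $T(X(F))=X(F)$. Because every point of $[0,1]$ begins a forward trajectory, surjectivity of $T$ is equivalent to surjectivity of $F$; hence $F^n([0,1])=[0,1]$ for all $n$, so $W=[0,1]$ and $X(F|_W)=X(F)$.

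For the biconditionals I would pass to the surjective setting. By the Remark above we have $Y(F)=Y(F|_W)$ and $W=F(W)$, so $H:=F|_W$ is a surjective multi-map of $W$ with nonempty closed values, and it suffices to compare $X(H)$ and $Y(H)$. The conceptual engine is the time-reversal map $\rho\colon Y(H)\to X(H^{-1})$ given by $\rho(y)_j=y_{-j}$ for $j\geq 0$, where $H^{-1}$ is the multi-map whose graph $G(H^{-1})$ is the reflection of $G(H)$ across the diagonal (note $H^{-1}$ again has nonempty closed values precisely because $H$ is surjective). A direct check shows $\rho$ is a homeomorphism with $\rho\circ S=T\circ\rho$, so $(Y(H),S)$ is topologically conjugate to $(X(H^{-1}),T)$, and each property in question is a conjugacy invariant. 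Thus every biconditional reduces to the symmetric assertion that $X(H)$ has the property if and only if $X(H^{-1})$ does. This last step I would settle with the finite-trajectory characterizations (Lemmas~\ref{Lemma:TransCharacterization}, \ref{Lemma:MixingCharacterization}, and \ref{Lemma:SpecCharacterization}, which hold for the forward space of any multi-map by the same elementary arguments): reversal carries the finite $H$-trajectories bijectively onto the finite $H^{-1}$-trajectories while interchanging the roles of ``starts near'' and ``ends near,'' and each characterization is invariant under this interchange. For density of periodic points one uses that a periodic trajectory is a cycle in $G(H)$, an object preserved (up to reversal) by this correspondence.

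The step I expect to be the main obstacle is the specification property. Its characterization in Lemma~\ref{Lemma:SpecCharacterization} is the most intricate: it involves arbitrarily many specified blocks $x^1,\dots,x^r$ with prescribed gaps $j_k\geq N$ together with a closing condition $z_J=z_0$ that makes the concatenation periodic, and the tail estimates live in the weighted metric $d$. Transferring this across $\rho$ requires checking that reversal respects the block-and-gap bookkeeping and that the periodic closing survives; in the equivalent truncate--reinterpret--extend form of the argument, it is exactly the backward extension of the closing cycle into an inverse trajectory where surjectivity of $H$ is indispensable. The density-of-periodic-points half of the Devaney-chaos equivalence presents the same, milder, difficulty, since producing dense periodic inverse trajectories again relies on extending finite cycles backward.
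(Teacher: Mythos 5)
Your proposal is correct in substance but organized quite differently from the paper. The paper establishes this theorem by per-property adaptations of the truncate--extend argument used for Proposition~\ref{ForwardTransitiveImpliesInverseTransitive}: after restricting to $W$ (where the Remark gives $Y(F)=Y(F|_W)$ and $F(W)=W$), one truncates the given inverse trajectories to finite windows, re-extends them to forward trajectories, invokes the forward property, and then uses surjectivity of $F|_W$ to extend the resulting finite trajectory backward into an inverse trajectory; the ``moreover'' clause rests on the same observation you make, namely that transitivity of $(X(F),T)$ forces $F$ to be surjective, hence $F^n([0,1])=[0,1]$ for all $n$ and $W=[0,1]$. Your route packages this duality structurally: writing $H=F|_W$, you prove once that the time-reversal map $\rho\colon (Y(H),S)\to (X(H^{-1}),T)$ is a conjugacy, after which all three equivalences reduce to the symmetric assertion that $X(H)$ and $X(H^{-1})$ share each property, settled by reversing finite trajectories inside the characterizing Lemmas~\ref{Lemma:TransCharacterization}, \ref{Lemma:MixingCharacterization}, and \ref{Lemma:SpecCharacterization}. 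This buys uniformity --- one mechanism instead of three parallel ad hoc arguments --- and it isolates exactly where surjectivity enters ($H$ is surjective precisely when $H^{-1}$ has nonempty values); the paper's route avoids introducing $H^{-1}$ at all and stays closer to machinery already verified, at the cost of repeating the extension bookkeeping for each property.

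Two small patches are needed. First, in your surjectivity argument one cannot always choose a nonempty open $U$ disjoint from a given proper open $V$ (take $V$ dense); fix this by shrinking $V$ to a small ball whose closure is a proper subset of $X(F)$, the one-point case being trivial, and then the forced $n\geq 1$ gives that $T(X(F))$ meets every nonempty open set, whence $T(X(F))=X(F)$ by compactness. Second, Lemma~\ref{Lemma:SpecCharacterization} is stated in the paper only as a \emph{sufficient} condition for specification, while your transfer argument uses it as a characterization; you must supply the (easy) converse that specification implies the finite block-and-gap condition --- extend the given finite trajectories $x^1,\dots,x^r$ to points of $X(H)$, apply specification in the metric $d$ (noting $d(T^ix,T^iz)<\eps$ already yields $|x_i-z_i|<\eps$), and truncate the periodic shadowing point at its period to obtain $z\in X_{J+1}$ with $z_J=z_0$ --- before reversal can be applied. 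Neither point affects the soundness of the overall plan.
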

	
	The following corollary follows from Theorem~\ref{Thm:forwardvsinverse} combined with Theorems~\ref{Thm:Transitivity}--\ref{Thm:Specification}.
	
	\begin{cor}\label{Cor:MainThmsInverse}
		Let $F$ be a Markov multi-map with associated SFT $\Sigma_M$.
		\begin{enumerate}
			\item If $\Sigma_M$ satisfies the Irreducibility Condition (IC) and the Coding Condition (CC), then the inverse limit system $(Y,S)$ is Devaney chaotic.
			\item If $\Sigma_M$ satisfies the Mixing Condition (MC) and the Coding Condition (CC), then the inverse limit system $(Y,S)$ is topologically mixing.
			\item If $\Sigma_M$ satisfies the Mixing Condition (MC) (with mixing component $\C_0$) and the Coding Condition (CC), and if $\{g_a\colon a\in\C_0\}$ is uniformly equicontinuous, then the inverse limit system $(Y,S)$ has the specification property.
		\end{enumerate}
	\end{cor}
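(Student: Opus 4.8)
The plan is to read this corollary as an assembly of results already established: feed each hypothesis on $\Sigma_M$ into the corresponding one of Theorems~\ref{Thm:Transitivity}--\ref{Thm:Specification} to obtain the matching dynamical property for the \emph{forward} trajectory system $X = X(F)$, and then invoke Theorem~\ref{Thm:forwardvsinverse} to transport that property to the inverse limit system $(Y,S) = (Y(F),S)$. The only structural wrinkle is that Theorem~\ref{Thm:forwardvsinverse} is stated for $X(F|_W)$ with $W = \bigcap_{n \geq 1} F^n([0,1])$ rather than for $X(F)$ itself; this is resolved by its concluding ``Moreover'' clause, which guarantees that once $X(F)$ is Devaney chaotic, topologically mixing, or has specification, one has $W = [0,1]$ and therefore $X(F|_W) = X(F)$. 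Hence in every case the equivalences of Theorem~\ref{Thm:forwardvsinverse} may be applied directly to $X(F)$.

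Concretely, for part (1) I would assume (IC) and (CC) and apply Theorem~\ref{Thm:Transitivity}(2) to conclude that $X(F)$ is topologically transitive with dense periodic points, i.e. Devaney chaotic; the ``Moreover'' clause then gives $X(F|_W) = X(F)$, and Theorem~\ref{Thm:forwardvsinverse}(1) yields Devaney chaos for $(Y,S)$. Part (2) is formally identical: (MC) and (CC) give topological mixing of $X(F)$ by Theorem~\ref{Thm:Mixing}(2), the ``Moreover'' clause identifies $X(F|_W)$ with $X(F)$, and Theorem~\ref{Thm:forwardvsinverse}(2) transports mixing to $(Y,S)$. Part (3) runs along the same lines, with Theorem~\ref{Thm:Specification} producing the specification property for $X(F)$ and Theorem~\ref{Thm:forwardvsinverse}(3) carrying it over to $(Y,S)$.

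The point that requires care is the equicontinuity hypothesis in part (3). Since $\C_0 \subset \A$ is finite and each $g_a$ is continuous on the compact interval $R(a)$, the family $\{g_a : a \in \C_0\}$ is automatically uniformly equicontinuous, so read literally the hypothesis is vacuous. What is actually needed to invoke Theorem~\ref{Thm:Specification} is uniform equicontinuity of the \emph{word} family $\{g_u : u \in \mathcal{L}(\C_0)\}$, which is a genuine condition because $\mathcal{L}(\C_0)$ is infinite. Accordingly I would interpret the hypothesis of part (3) as asserting uniform equicontinuity of $\{g_u : u \in \mathcal{L}(\C_0)\}$, exactly matching Theorem~\ref{Thm:Specification}; under this reading the proof is immediate. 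If instead one insists on deducing the word-family condition from a per-symbol condition, the main obstacle would be that a naive bound on the composed modulus of continuity degrades with word length, and one would have to exploit the nested containments $I_{u_0 \cdots u_n} \subseteq I_{u_0 \cdots u_{n-1}}$ together with (CC) --- as in the slope-telescoping argument of Proposition~\ref{Prop:equicontinuitycondition} --- rather than composing moduli directly.
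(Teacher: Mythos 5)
Your proposal matches the paper's own proof exactly: the paper dispatches this corollary in one line, by combining Theorems~\ref{Thm:Transitivity}--\ref{Thm:Specification} (applied to the forward trajectory system) with Theorem~\ref{Thm:forwardvsinverse}, with the ``Moreover'' clause supplying the identification $X(F|_W)=X(F)$ precisely as you describe. Your further observation that the part~(3) hypothesis should be read as uniform equicontinuity of the word family $\{g_u\colon u\in\mathcal{L}(\C_0)\}$ --- since the per-symbol family $\{g_a\colon a\in\C_0\}$ is finite and hence automatically uniformly equicontinuous --- is the correct reading of an evident typo in the corollary's statement and is exactly what Theorem~\ref{Thm:Specification} requires.
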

	
	\section{Examples}\label{Sect:Examples}
	
	In this section, we present two examples of Markov multi-maps. Both have SFTs satisfying the Mixing Condition (MC), but only the first has a forward trajectory system that is topologically mixing. The key difference is that the second example does not satisfy the Coding Condition (CC). This example illustrates that in the absence of the Coding Condition (CC), the properties of the associated SFT do not in general determine the properties of the forward trajectory system.
	
	\begin{center}
		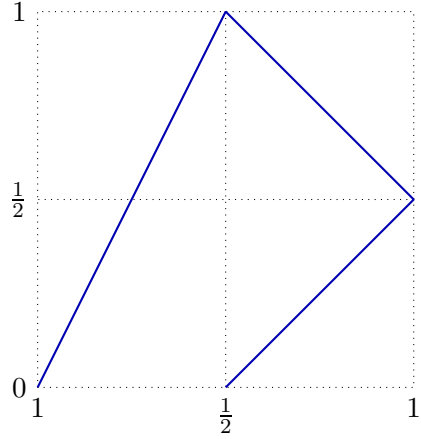
\begin{figure}
			\begin{tikzpicture}[scale=5]
				\draw[dotted] (0,0)node[left]{\small 0} -- (0,1)node[left]{\small 1} -- (1,1) -- (1,0)node[below]{\small 1} -- (0,0)node[below]{\small 1};
				\draw[dotted] (1/2,1) -- (1/2,0)node[below]{\small$\frac{1}{2}$};
				\draw[dotted] (1,1/2) -- (0,1/2)node[left]{\small$\frac{1}{2}$};
				\draw[blue!70!black,thick, join=bevel] (0,0) -- (1/2,1) -- (1,1/2) -- (1/2,0);
			\end{tikzpicture}
			\caption{Multi-map from Example~\ref{Example:EverythingWorks}}\label{Figure:EverythingWorks}
		\end{figure}
	\end{center}
	
	\begin{example}\label{Example:EverythingWorks}
		Let $F$ be the Markov multi-map pictured in Figure~\ref{Figure:EverythingWorks}. Then $P=\{0,1/2,1\}$, $\A_1=\emptyset$, and we may enumerate the elements of $\A_0$ and $\A_2$ by $\A_0=\{a_1,a_2,a_3\}$ and $\A_2=\{a_4,a_5,a_6,a_7\}$ where
		\begin{align*}
			D(a_1)&=\left[0,\frac{1}{2}\right] & R(a_1)&=[0,1]\\
			D(a_2)&=\left[\frac{1}{2},1\right] & R(a_2)&=\left[\frac{1}{2},1\right]\\
			D(a_3)&=\left[\frac{1}{2},1\right] & R(a_3)&=\left[0,\frac{1}{2}\right]\\
			D(a_4)&=\{0\} & R(a_4)&=\{0\}\\
			D(a_5)&=\left\{\frac{1}{2}\right\} & R(a_5)&=\{1\}\\
			D(a_6)&=\{1\} & R(a_6)&=\left\{\frac{1}{2}\right\}\\
			D(a_7)&=\left\{\frac{1}{2}\right\} & R(a_7) &=\{0\}
		\end{align*}
		The function $f_{a_1}\colon[0,1/2]\to[0,1]$ is given by $f_{a_1}(x)=2x$; $f_{a_2}\colon[1/2,1]\to[1/2,1]$ is given by $f_{a_2}(x)=3/2-x$; $f_{a_3}\colon[1/2,1]\to[0,1/2]$ is given by $f_{a+3}(x)=-3/2+x$.
		
		Then $\A^{(e)}=\A_0$ and $\mathcal{L}_2(\A_0)$ consists of the words $a_1a_1$, $a_1a_2$, $a_1a_3$, $a_2a_2$, $a_2a_3$, and $a_3a_1$. The set $\A_0$ is a mixing component. Moreover, since $G(a_1)$, $G(a_2)$, and $G(a_3)$ are all straight lines, and $R_0(a_1)\cap P\neq\emptyset$, we have by Corollary~\ref{Cor:codingcondition} that $\Sigma_M$ satisfies the Coding Condition. Additionally, by Propositions~\ref{Prop:equicontinuitycondition}, the family of inverse functions $\{g_u\colon u\in\mathcal{L}(\A_0)\}$ is equicontinuous. 
		
		Therefore by Theorem~\ref{Thm:Specification} and Corollary~\ref{Cor:MainThmsInverse}, both the forward and the inverse trajectory systems have the specification property.
	\end{example}

	\begin{example}\label{Example:NotTransitive}
		Let $F$ be the multi-map pictured in Figure~\ref{Figure:NotTransitive}. Once again $P=\{0,1/2,1\}$ and $\A_1=\emptyset$. Just as in Example~\ref{Example:EverythingWorks}, $\A^{(e)}=\A_0$, so we ignore $\A_2$. 
		We label the line segments clockwise beginning with the bottom left, so we have $\A_0=\{a_1,a_2,a_3,a_4\}$ where
		\begin{align*}
			D(a_1)&=\left[0,\frac{1}{2}\right] & R(a_1)&=\left[0,\frac{1}{2}\right]\\
			D(a_2)&=\left[0,\frac{1}{2}\right] & R(a_2)&=\left[\frac{1}{2},1\right]\\
			D(a_3) &=\left[\frac{1}{2},1\right] & R(a_3) &=\left[\frac{1}{2},1\right]\\
			D(a_4) &=\left[\frac{1}{2},1\right] & R(a_4) &=\left[0,\frac{1}{2}\right].
		\end{align*}
		We have $f_{a_1}(x)=1/2-x$, $f_{a_2}(x)=1/2+x$, $f_{a_3}(x)=3/2-x$, and $f_{a_4}(x)=-3/2+x$.
		
		Once again $\A_0$ is a mixing component of $\A$. However, in this case, $\Sigma_M$ does not satisfy the Coding Condition, and $X$ is not topologically transitive. To see that $\Sigma_M$ does not satisfy the Coding Condition, note that $I_u=I_{u_0}$ for every word $u\in\mathcal{L}(\A_0)$. This is because none of the functions have ranges which span multiple intervals from the partition. To see that $X$ is not topologically transitive, observe that every trajectory takes on at most four different values. Specifically, for all $x\in X$, there is some value $y\in[0,1/2]$ such that $x_i\in\{y,1/2-y,1/2+y,1-y\}$ for all $i\geq 0$.
	\end{example}
	
	\begin{center}
		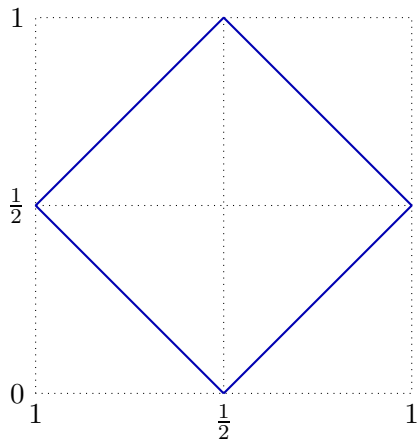
\begin{figure}
			\begin{tikzpicture}[scale=5]
				\draw[dotted] (0,0)node[left]{\small 0} -- (0,1)node[left]{\small 1} -- (1,1) -- (1,0)node[below]{\small 1} -- (0,0)node[below]{\small 1};
				\draw[dotted] (1/2,1) -- (1/2,0)node[below]{\small$\frac{1}{2}$};
				\draw[dotted] (1,1/2) -- (0,1/2)node[left]{\small$\frac{1}{2}$};
				\draw[blue!70!black,thick,join=bevel] (1/2,0) -- (0,1/2) -- (1/2,1) -- (1,1/2) -- (1/2,0);
			\end{tikzpicture}
			\caption{Multi-map from Example~\ref{Example:NotTransitive}}\label{Figure:NotTransitive}
		\end{figure}
	\end{center}

\end{document}